\newtheorem{theorem}{Theorem}[section]
\newtheorem{lemma}[theorem]{Lemma}
\theoremstyle{definition}
\title{Existence of primitive pairs with two prescribed traces
over finite fields}
\author{Aakash Choudhary\footnote{ \textit{email}: achoudhary1396@gmail.com }  and R. K. Sharma \footnote{ \textit{email}: rksharmaiitd@gmail.com }}
\affil{Department of Mathematics,\\ Indian Institute of Technology Delhi-110016, India}
\date{}
\begin{document}
\maketitle

\begin{abstract}
		Given $F= \mathbb{F}_{p^{t}}$, a field with $p^t$ elements, where  $p $ is a prime power, $t\geq 7$, $n$ are positive integers and $f=f_1/f_2$ is a rational function, where $f_1, f_2$ are relatively prime, irreducible polynomials with $deg(f_1) + deg(f_2) = n $ in $F[x]$. We construct a sufficient condition on $(p,t)$ which guarantees primitive pairing $(\epsilon, f(\epsilon))$ exists in $F$ such that $Tr_{\mathbb{F}_{p^t}/\mathbb{F}_p}(\epsilon) = a$ and $Tr_{\mathbb{F}_{p^t}/\mathbb{F}_p}(f(\epsilon)) = b$ for any prescribed $a,b \in \mathbb{F}_{p}$. Further, we demonstrate for any positive integer $n$, such a pair definitely exists for large $t$. The scenario when $n = 2$ is handled separately and we verified that such a pair exists for all $(p,t)$ except from possible 71 values of $p$. A result for the case $n=3$ is given as well.
\end{abstract}

\textbf{Keywords: }{Character, Finite fields, Primitive elements.}	
\\
%\subjclass{12E20, 11T23}
2020 Mathematics Subject Classification: 12E20, 11T23	

%	\maketitle
	\section{Introduction}  
	Let $\mathbb{F}_p$ represent a field of  finite order $p$, where $p=q^r$ for some prime $q$ and $r$, a positive integer. The multiplicative group  of  $\mathbb{F}_p$ is cyclic, it is denoted by $\mathbb{F}_{p}^{*}$ and a generator of $\mathbb{F}_{p}^{*}$ is referred to as a primitive element in $\mathbb{F}_{p}$. The field $\mathbb{F}_{p}$ has $\phi(p-1)$ primitive elements, where $\phi$ is the Euler's totient function. Let $\mathbb{F}_{p^{t}}$ denote an extension of $\mathbb{F}_{p}$ of degree $t$ for some positive integer $t$.
	%A non-zero element $\epsilon$ in $\mathbb{F}_{p^{t}}$ is primitive if and only if it is a root of an irreducible polynomial of degree $m$ over $\mathbb{F}_q$ and such irreducible polynomial is known as primitive polynomial.
	A necessary and sufficient condition for an element $\epsilon \in \mathbb{F}^{*}_{p^t}$ to be primitive is that it is a root of an irreducible polynomial of degree $t$ over $\mathbb{F}_p$ and such an irreducible polynomial is referred to as primitive polynomial.
	For $\epsilon \in \mathbb{F}_{p^{t}}$, the trace of $\epsilon$ over $\mathbb{F}_p$ denoted by 
$Tr_{\mathbb{F}_{p^t}/\mathbb{F}_p}(\epsilon)$, is defined as $Tr_{\mathbb{F}_{p^t}/\mathbb{F}_p}(\epsilon) = \epsilon + \epsilon^p + \epsilon^{p^2}+ \dots + \epsilon^{p^{t-1}}$.  
 
%In Cryptographic schemes like Diffie-Hellmen key exchange and Elgamel encryption scheme primitive elements are used as a basic tools.  
In Cryptographic schemes such as  Elgamel encryption scheme and the Diffie-Hellman key exchange, primitive elements serve as the fundamental building blocks.
Numerous applications of primitive elements can be found in Coding theory and Cryptography \cite{paar}, making the study of primitive elements and primitive polynomials an active research field. Please refer to \cite{rudolf} for more information about the existence of primitive elements in finite fields.
%For the existence of primitive elements in finite fields, please refer to \cite{rudolf}.
%In Cryptography and Coding theory \cite{paar} one can find many applications of Primitive elements, making the study of primitive elements and primitive polynomials an active area of research.
%We refer to \cite{rudolf} for the existence of primitive elements in finite fields.
For any rational function $f(x) \in \mathbb{F}_{p}(x)$ and $\epsilon \in \mathbb{F}_{p}$ we call the pair $(\epsilon, f(\epsilon))$, a primitive pair if both $\epsilon$ and $f(\epsilon)$ are primitive elements in $\mathbb{F}_{p}$. In general, if $\epsilon$ is primitive, $f(\epsilon)$ need not be primitive. For instance, take $x^2 + 3x +2 \in \mathbb{F}_7[x]   $, then $3,5$ are primitive elements in $\mathbb{F}_7$ but none of $f(3)$ and $f(5)$ are. 
\par
In 1985, Cohen \cite{cohen1985} introduced the term \textit{"primitive pair"} and he verified  the existence of primitive pairs $(\epsilon, f(\epsilon))$  in $\mathbb{F}_p$ for linear polynomials $f(x)= x+k \in \mathbb{F}_p[x]$. Since then   many researchers have conducted studies in this area \cite{sharma awasthi gupta, a.gupta 2017, a.gupta cohen 2018, a.gupta 2019}. Most recently, Cohen, Sharma and Sharma \cite{cohen hariom} have supplied a condition that ensures the occurrence of primitive pair
%sufficient condition for the existence of primitive pairs 
$(\epsilon, f(\epsilon))$ in $\mathbb{F}_{p}$ for non-exceptional rational function $f$, i.e., $f$ is not of the form $cx^j g^k(x)$, where $j \in \mathbb{Z}$, $k>1$ that divides $p -1$ and $c \in \mathbb{F}_{p}^{*}$, for any $g(x) \in \mathbb{F}_{p}(x)$. Jungnickel,  Vanstone  \cite{D Jungnickel} identified a sufficient condition for the occurrence of primitive elements $\epsilon \in \mathbb{F}_{p^t}$ with a prescribed trace of $\epsilon$. Later Cohen \cite{cohen 2005} extended the result with some exceptions. Chou and Cohen \cite{chou cohen}, in 2014, addressed the issue of the existence of primitive element $\epsilon \in \mathbb{F}_{p^t}$ such that
$Tr_{\mathbb{F}_{p^t}/\mathbb{F}_p}(\epsilon) = Tr_{\mathbb{F}_{p^t}/\mathbb{F}_p}(\epsilon^{-1}) = 0.$ Cao and Wang \cite{cao wang}, for $t\geq 29$, established a condition for the existence of  primitive pair $(\epsilon, f(\epsilon))$ with $f(x)=\frac{x^2 +1}{x} \in \mathbb{F}_{p^t}(x)$ such that for prescribed $a,b \in \mathbb{F}_{p}^{*}$, $Tr_{\mathbb{F}_{p^t}/\mathbb{F}_p}(\epsilon)=a$ and $Tr_{\mathbb{F}_{p^t}/\mathbb{F}_p}(\epsilon^{-1})=b$. In 2018, Gupta, Sharma and Cohen \cite{a.gupta cohen 2018}, for the same rational function and prescribed $a \in \mathbb{F}_p$, presented a condition that ensures the existence of primitive pair $(\epsilon,f(\epsilon))$ in $\mathbb{F}_{p^t}$ with $Tr_{\mathbb{F}_{p^t}/\mathbb{F}_p}(\epsilon)=a$  for $t\geq 5$. Then in 2019, Gupta and Sharma \cite{a.gupta 2019} extended the result to the rational function $\Gamma_M(x) = \frac{a_{11}x^2 + a_{12}x +a_{13}}{a_{22}x+a_{23}}$, where $M= 
\begin{pmatrix}
a_{11}&a_{12}&a_{13}\\
0&a_{22}&a_{23}
\end{pmatrix} \in \mathbb{M}_{2 \times3}(\mathbb{F}_{p^t})$ is any matrix of rank $2$, and if $\Gamma_M(x) = \lambda x$ or $\lambda x^2$ for some $\lambda \in \mathbb{F}_{p^t}$, then $\lambda =1$.  In 2021, Sharma and Sharma \cite{hariom 2} examined the rational function $f=f_1/f_2$ in $\mathbb{F}_{p^t}(x)$, where $f_1$ and $f_2$ are relatively prime, irreducible polynomials and proved that for  prescribed $a,b \in \mathbb{F}_p$, the existence of primitive pair $(\epsilon, f(\epsilon))$ in $\mathbb{F}_{p^t}$ such that
$Tr_{\mathbb{F}_{p^t}/\mathbb{F}_p}(\epsilon)=a$ and $Tr_{\mathbb{F}_{p^t}/\mathbb{F}_p}(\epsilon^{-1})=b$ for $t \geq 7$. 

\vspace{2mm}
Prior to this article, for primitive pairs, traces were considered for $\epsilon$ and $\epsilon^{-1}$. In this article, we will consider the trace onto the element $\epsilon$ and its image under $f$, i.e, $f(\epsilon)$.  
%In this article, we take the trace onto the element $\epsilon \in \mathbb{F}_{p^t}$ and its image under $f$, i.e., $f(\epsilon)$.
%where $f$ is a rational function over $\mathbb{F}_q$.
Some terminology and conventions are introduced for explanation.
%To make it more precise we introduce some terminology and conventions.
%of degree sum $n$ and $\mathbb{F}_q$ is a field of any prime characteristic $p$.
We say that a non-zero polynomial $f$ over $\mathbb{F}_p[x]$ has degree $k\geq 0,$ if $f(x) = a_k x^k + a_{k-1} x^{k-1}+\dots+ a_1 x + a_0$, where $a_k\neq 0$ and we write the degree of $f$ as $deg(f)=k$. Next, we suppose that, for  a rational function $f(x) = \frac{f_1(x)}{f_2(x)} \in \mathbb{F}_{p}(x)$, $f_1$ and $f_2$ are relatively prime, irreducible polynomials and define the degree-sum as $degsum(f)= deg(f_1) + deg(f_2)$. We will now define various sets that will play a crucial role in this article.
%We now define  some sets that will play an important role in this article. 

%More  precisely, let $f(x) = \frac{f_1(x)}{f_2(x)}$ be a rational function over $\mathbb{F}_q$, where $f_1$ and $f_2$ are irreducible polynomial and are co-prime to each other. Define the degree sum of $f$ as $degsum(f)= deg(f_1) + deg(f_2)$. 

\begin{enumerate}
    \item We define $R_{p,t}(n_1,n_2)$ to represent the set of all rational function $f(x) = \frac{f_1(x)}{f_2(x)} \in \mathbb{F}_{p^t}(x)$ such that $f_1$ and $f_2$ are relatively prime, irreducible polynomials over $\mathbb{F}_{p^t}$  with $deg(f_1)=n_1$ and $deg(f_2)=n_2$. 
    
    \item Denote  $A_{n_1,n_2}$ as the set consisting of pairs $(p,t) \in \mathbb{N}\times \mathbb{N}$ such that for any $f \in R_{p,t}(n_1,n_2)$ and prescribed $a,b \in \mathbb{F}_p$, $\mathbb{F}_{p^t}$ contains an element $\epsilon$ such that $(\epsilon, f(\epsilon))$ is a primitive pair with $Tr_{\mathbb{F}_{p^t}/\mathbb{F}_p}(\epsilon)=a$ and $Tr_{\mathbb{F}_{p^t}/\mathbb{F}_p}(f(\epsilon))=b$. 
    
    \item  Define, $R_{p,t}(n) =\bigcup_{n_1+n_2=n}R_{p,t}(n_1,n_2)$ and $A_{n}= \bigcap_{n_1+n_2=n} A_{n_1,n_2}$.
\end{enumerate}

First, in this paper, for $n \in \mathbb{N}$, we consider $f(x) \in R_{p,t}(n)$ 
%a rational function of degree sum $n$ 
and $a,b \in \mathbb{F}_p$, and then verify that there exists an element $\epsilon \in \mathbb{F}_{p^t}$ such that $(\epsilon,f(\epsilon))$ is a primitive pair in $\mathbb{F}_{p^t}$ with $Tr_{\mathbb{F}_{p^t}/\mathbb{F}_p}(\epsilon)=a$ and $Tr_{\mathbb{F}_{p^t}/\mathbb{F}_p}(f(\epsilon))=b$, i.e., we provide a sufficient condition on $p^t$ such that $(p,t) \in A_n$. Furthermore, using a sieve variation of this sufficient condition, we prove the following result:
%Further we proved the following result using a sieving modification of this sufficient condition.  

%More precisely, in section ( ) we established a sufficient condition for the existence of such element in $\mathbb{F}_{q^m}$. We improve this sufficient condition further by giving  a sieving modification in Section ( ). In Sec ( ) we provide the application of results in sec ( ) and sec ( ) with a working example.

\begin{theorem}
Let t, q, r, p $\in \mathbb{N}$ be such that q is a prime number, t $\geq 7$ and $p=q^r$. Suppose p and t assumes none of the following values:
\begin{enumerate}
    \item $2 \leq p \leq 16$ or $p=19,23,25,27,31,37,43,49,61,67,79$
    and $t=7$; 
    
    \item $2 \leq p \leq 31$ or $p=32,37,41,43,47,83$ and $t=8;$
    
    \item $2\leq p \leq 8$ or $p=11,16$ and $t=9$;
    
    \item $p=2,3,4,5,7$ and $t=10,12$;
    
    \item $p=2,3,4$ and $t=11$; 
    
    \item $p=2$ and $t=14,15,16,18,20,24$.
\end{enumerate}
Then $(p,t) \in A_2$.
\end{theorem}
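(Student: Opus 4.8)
The plan is to prove the theorem via the standard character-sum method for primitivity plus prescribed-trace conditions, sharpened by a prime sieve to bring the excluded list down to the finite exceptional set named in the statement. First I would recall that, by the results developed earlier in the paper (the general sufficient condition for $(p,t)\in A_n$), membership in $A_2$ is guaranteed whenever a certain inequality of the shape
\[
p^{t/2-1} > C\,W(p^t-1)^2\,W(r_1)W(r_2)\,\theta
\]
holds, where $W(m)$ denotes the number of squarefree divisors of $m$, the $r_i$ are related to the radicals controlling the $f_i$, and $\theta$ accounts for the two trace conditions (each contributing a factor like $p$, so $\theta$ is a modest power of $p$, together with the degree-sum $n=2$ data). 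The first step is therefore to instantiate this master inequality with $n=2$, writing it in the cleanest form where the left side is a power of $p^t$ and the right side is (number-of-squarefree-divisors factors) times (an explicit polynomial in $p$ of low degree).

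Next I would bound the divisor factors crudely: using $W(m)\le d(m)\le c_\delta m^{\delta}$ for any $\delta>0$ with an explicit constant, one gets that the master inequality holds for all sufficiently large $p^t$. Concretely, fixing $t=7$ first (the smallest allowed value, hence the hardest), I would choose $\delta$ so that the surviving inequality becomes $p^{7(1/2-1-2\delta)-\cdots}>$ const, which fails only for $p$ below an explicit bound $B_7$; every prime power $p\le B_7$ is then a finite list to be checked. For each larger $t$ the exponent on the left grows, so the analogous bound $B_t$ shrinks, and one checks $t=7,8,9,\dots$ until $B_t<2$, after which nothing remains; for $t=2$ one simply notes $p^0=1$ handling is vacuous since $t\ge7$. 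This produces a finite, explicit list of candidate pairs $(p,t)$ that the crude estimate cannot dispatch.

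The main obstacle — and the technical heart — is that the crude divisor bound leaves too many candidate pairs, so one must run the sieve inequality (the "sieve variation" referred to in the excerpt) on those survivors. For each stubborn pair $(p,t)$ I would factor $p^t-1$, choose a set of "sieving primes" $\{\ell_1,\dots,\ell_s\}$ dividing $p^t-1$ (and similarly for the $r_i$), and verify the sieve condition
\[
\delta\,\Bigl(p^{t/2-1}\Bigr) > \Bigl(2\,\Delta + s - 1\Bigr)\,U\,\theta,
\]
where $\delta=1-\sum 1/\ell_i>0$, $\Delta=\sum 1/\ell_i /\delta$, $U$ collects the squarefree-divisor factors over the non-sieved part, and $\theta$ is as before. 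Choosing the $\ell_i$ to be the largest prime divisors maximizes $\delta$ and minimizes $U$; a short computer search over such choices clears all but the pairs explicitly listed in items (1)–(6). The remaining finitely many genuine exceptions in the statement are exactly those for which even the best sieve choice fails the inequality, and these are recorded without claim of being true counterexamples.

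Finally I would assemble the pieces: the master inequality plus crude bound gives $(p,t)\in A_2$ for all but a finite explicit set; the sieve removes the bulk of that set; what is left is precisely the enumerated list, so $(p,t)\in A_2$ for every $(p,t)$ with $t\ge7$ outside items (1)–(6), as claimed. The steps I expect to be delicate in write-up, though routine in spirit, are (i) getting the constant in $W(m)\le c_\delta m^\delta$ tight enough that $B_7$ is small (a few hundred rather than astronomically large), and (ii) the bookkeeping in the sieve for the two separate trace conditions and the two polynomials $f_1,f_2$ of degree-sum $2$, since the relevant multiplicative and additive characters interact and one must be careful that the Gauss/Weil bound exponent is exactly $p^{t/2-1}$ and not something weaker.
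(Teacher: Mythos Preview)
Your overall architecture---master sufficient condition, crude divisor bound to reduce to finitely many pairs, then sieve to cut that list down to the stated exceptions---is exactly the paper's route. However, several of the concrete inequalities you wrote are wrong, and with those errors the numerics would not reproduce the exception list.

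First, the master inequality (the paper's Theorem~3.1) is
\[
p^{t/2-2} > (2n+1)\,W(p^t-1)^2,
\]
so for $n=2$ it is $p^{t/2-2} > 5\,W(p^t-1)^2$. The exponent is $t/2-2$, not $t/2-1$: each of the two trace conditions contributes a factor $1/p$ already absorbed into the exponent, so there is no separate $\theta$ on the right. There are also no extra $W(r_1)W(r_2)$ factors; the square $W(p^t-1)^2$ is exactly the contribution of the two freeness conditions on $\epsilon$ and $f(\epsilon)$, and nothing further depends on the individual $f_i$.

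Second, your sieve inequality is off in two places. Because both primitivity conditions are sieved, the correct quantities are
\[
\delta = 1 - 2\sum_{i=1}^{m}\frac{1}{q_i}, \qquad \Delta = \frac{2m-1}{\delta} + 2,
\]
and the condition is $p^{t/2-2} > (2n+1)\,\Delta\,W(k)^2$. Missing the factor $2$ in $\delta$ would make the sieve look far stronger than it is and would yield a shorter (hence incorrect) exception list.

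Third, the paper's bootstrapping is more structured than a single $W(m)\le c_\delta m^\delta$ estimate: it first disposes of $\omega(p^t-1)\ge 1547$ via the explicit bound $W(\kappa)\le \kappa^{1/12}$, then repeatedly applies the sieve on shrinking ranges of $\omega(p^t-1)$ (choosing $k$ as the product of the smallest several prime divisors), tabulating $\delta$, $\Delta$, and the resulting threshold at each stage. Only after this iterated descent does one arrive at a manageable list of candidate $(p,t)$, on which the sieve is run individually. Your plan to ``choose $\delta$'' once and get a single bound $B_t$ would leave far too many survivors at $t=7$.
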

\textit{Note:-} The exceptions in above theorem need not be true exceptions, they are possible exceptions.

SageMath \cite{sage} is used to perform all nontrivial calculations required throughout this article.

%It is evident that the sufficient condition (Theorem 4.1) and the sieving technique (Theorem 4.5) completely depend on $q^m$ and the degrees of the numerator and denominator polynomials of the rational function. It is easy to note that Trace part of the main result in \cite{hariom 2} is a special case of our result with $f(x) = \frac{1}{x}$.

	\section{Preliminaries}\label{2}
	
	%In this section, we introduce some preliminary definitions, notations and results which will be required  in the forthcoming sections of this article.
	In this section, we present some basic concepts, notations, and results that will be used in forthcoming sections of this article.
	Throughout the article, $t$ is a positive integer, $p$ is an arbitrary prime power and $\mathbb{F}_p$ is a finite field of order $p$. 
\par 
\subsection{Definitions}
\begin{enumerate}
    \item A character of a finite abelian group $G$ is a homomorphism $\chi$ from the set $G$ into $Z^1$, where $Z^1$ is the set of all elements of complex field $\mathbb{C}$ with absolute value 1.
The trivial character of $G$ denoted by $\chi_0$, is  defined as $\chi_0(g) = 1$ for all $g \in G$.
In addition, the set of all characters of $G$, denoted by $\hat{G}$, forms
a group under multiplication, which is isomorphic to $G$. The order
of a character $\chi$ is the least positive integer $d$ such that $\chi^d= \chi_0$. For a finite field $\mathbb{F}_{p^t}$,
a character of the additive group $\mathbb{F}_{p^t}$  is called an additive character and that of the  multiplicative group $\mathbb{F}^{*}_{p^t}$
is called a multiplicative character.\\ 
\item For $u$, a divisor of $p^t -1$, an element $ \zeta \in \mathbb{F}^{*}_{p^t}$ is called $u$-$free$, if whenever $\zeta = \xi^{s}  $, where $ \xi \in \mathbb{F}_{p^t}$ and $s|u$ implies $s=1$. We see that an element $ \zeta \in \mathbb{F}^{*}_{p^t}$ is $(p^t -1)$-$free$ if and only if it is a primitive element of $\mathbb{F}_{p^t}$.
\end{enumerate}
For more information on characters, primitive elements and finite fields, we refer the reader to \cite{rudolf}.
\par
The following conclusion holds as a particular case of \cite[Lemma 10]{u free lemma}:
%As a special case of \cite[Lemma 10]{u free lemma}, the following conclusion holds:
\begin{lemma}
Let u be a divisor of $p^t-1$, $\zeta \in \mathbb{F}_{p^t}^{*}$, then we have:
\[ 
\sum_{s|u}\frac{\mu(s)}{\phi(s)}\sum_{\chi_s}\chi_{s}(\zeta)= \left\{
\begin{array}{ll}
      \dfrac{u}{\phi(u)} & if \; \zeta \;is\; u-free, \\
      0 & otherwise \\
\end{array} 
\right. 
\]
where $\mu(.)$ is the Mobius function and $\phi(.)$ is the Euler function, $\chi_s$ runs through all the $\phi(s)$
multiplicative characters over $\mathbb{F}^{*}_{p^t}$
 with order $s$.
\end{lemma}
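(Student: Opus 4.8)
The statement to prove is Lemma 2.1, a character-sum identity characterizing $u$-free elements. The plan is to verify the indicator-function identity directly by evaluating the inner sum over multiplicative characters of fixed order and then summing over divisors via Möbius inversion.

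First I would fix notation and recall the structure of the character group. Since $\mathbb{F}_{p^t}^*$ is cyclic of order $p^t-1$, its character group $\widehat{\mathbb{F}_{p^t}^*}$ is cyclic of the same order, and for each divisor $s$ of $p^t-1$ there are exactly $\phi(s)$ characters of order $s$; these are precisely the $\chi_s$ ranging over the inner sum. The key preliminary observation is the evaluation of the full sum over all characters whose order divides a given $d$: if $\chi$ ranges over the subgroup of characters of order dividing $d$ (there are exactly $d$ of them, forming the annihilator of the subgroup of $d$-th powers), then for any $\zeta \in \mathbb{F}_{p^t}^*$,
\begin{equation}\label{eq:annihilator}
\sum_{\mathrm{ord}(\chi)\mid d}\chi(\zeta)=
\begin{cases}
d & \text{if } \zeta \text{ is a } d\text{-th power in } \mathbb{F}_{p^t}^*,\\
0 & \text{otherwise.}
\end{cases}
\end{equation}
This is the standard orthogonality relation applied to the quotient group $\mathbb{F}_{p^t}^*/(\mathbb{F}_{p^t}^*)^d$, which has order $d$; the characters trivial on $(\mathbb{F}_{p^t}^*)^d$ are exactly those of order dividing $d$, and their sum detects membership in the kernel.

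Next I would decompose by exact order. Writing the inner sum $\Theta_d(\zeta):=\sum_{\mathrm{ord}(\chi)=d}\chi(\zeta)$ over characters of exact order $d$, the identity \eqref{eq:annihilator} becomes $\sum_{d\mid s}\Theta_d(\zeta)=s\cdot[\zeta \text{ is an } s\text{-th power}]$ for each $s\mid u$. The quantity I must evaluate is
\begin{equation}\label{eq:target}
S:=\sum_{s\mid u}\frac{\mu(s)}{\phi(s)}\sum_{\mathrm{ord}(\chi)=s}\chi(\zeta)
=\sum_{s\mid u}\frac{\mu(s)}{\phi(s)}\,\Theta_s(\zeta).
\end{equation}
Because $\mu(s)$ vanishes on non-squarefree $s$, only squarefree divisors of $u$ contribute, and for squarefree $s$ the factor $\tfrac{1}{\phi(s)}$ is exactly the normalization built into Lemma~10 of the cited reference, so the cleanest route is to invoke that lemma's general form directly with the stated specialization. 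Alternatively, to keep the argument self-contained, I would express $\Theta_s(\zeta)$ in terms of the cumulative sums in \eqref{eq:annihilator} by Möbius inversion over the divisor lattice, substitute into \eqref{eq:target}, and interchange the order of summation so that the coefficient of each indicator $[\zeta \text{ is a } d\text{-th power}]$ collects into a divisor sum that simplifies via the multiplicativity of $\mu$ and $\phi$.

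The main obstacle, and the crux of the bookkeeping, is showing that the reorganized coefficients collapse to give the $u$-free indicator with the precise constant $u/\phi(u)$. The point is that $\zeta$ is $u$-free exactly when it is not an $\ell$-th power for any prime $\ell\mid u$, so by inclusion-exclusion the combination in \eqref{eq:target} must sieve out every such power class; the squarefree Möbius weights $\mu(s)/\phi(s)$ are engineered to perform precisely this sieve, and on the $u$-free locus the surviving terms telescope to $\prod_{\ell\mid u}\tfrac{\ell}{\ell-1}=u/\phi(u)$ after accounting for the radical of $u$. Since the statement is declared to be a particular case of \cite[Lemma 10]{u free lemma}, the honest and shortest proof is to state the specialization and verify that the hypotheses of that lemma are met, namely that $u\mid p^t-1$ and $\zeta\in\mathbb{F}_{p^t}^*$, and that the notion of $u$-free used there coincides with the one defined in Section~\ref{2}; I would then remark that the case $u=p^t-1$ recovers the primitivity characterization noted earlier, serving as a sanity check on the constant.
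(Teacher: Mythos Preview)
The paper does not prove this lemma at all: it simply records it as a particular case of \cite[Lemma 10]{u free lemma} and moves on. Your closing suggestion---to invoke that reference directly after checking that $u\mid p^t-1$, $\zeta\in\mathbb{F}_{p^t}^*$, and the $u$-free definitions coincide---is therefore exactly what the paper does.

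Your preceding self-contained sketch via orthogonality and M\"obius inversion goes beyond the paper and is correct in outline. One small sharpening: the step you describe as ``telescoping'' is really a multiplicative factorization. Since only squarefree $s\mid u$ contribute, the sum $S$ factors over the primes $\ell\mid u$ as
\[
S=\prod_{\ell\mid u}\Bigl(1-\frac{1}{\ell-1}\sum_{\mathrm{ord}(\chi)=\ell}\chi(\zeta)\Bigr)
=\prod_{\ell\mid u}\frac{\ell\bigl(1-[\zeta\text{ is an }\ell\text{-th power}]\bigr)}{\ell-1},
\]
which immediately gives $u/\phi(u)$ when $\zeta$ is $u$-free and $0$ otherwise. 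This makes the bookkeeping you flagged as the ``main obstacle'' transparent.
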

Therefore for $u $, a divisor of $p^t -1$ 
 \begin{align}
    \rho_u : \epsilon \mapsto \theta(u)\sum_{s|u} \frac{\mu(s)}{\phi(s)}\sum_{\chi_s}\chi_{s}(\epsilon)
 \end{align}
     gives a characteristic function for the subset of $u$-$free$ elements of $\mathbb{F}^{*}_{p^t}$, where $\theta(u) = \phi(u)/u$.
     
 Also for $a \in \mathbb{F}_p$,
 
 \begin{align}
      \tau_a : \epsilon \mapsto \frac{1}{p} \sum_{\psi \in \hat{\mathbb{F}_p}}\psi(Tr_{\mathbb{F}_{p^t}/\mathbb{F}_p}(\epsilon) -a) 
 \end{align}
 
 is a characteristic function for the subset of $\mathbb{F}_{p^t}$ whose elements satisfy  $Tr_{\mathbb{F}_{p^t}/\mathbb{F}_p}(\epsilon) =a$. From \cite[Theorem 5.7]{rudolf}, any additive character $\psi$ of $\mathbb{F}_p$ can be derived by $\psi(a) = \psi_0(ua)$,  where $\psi_0$ is the canonical additive character of $\mathbb{F}_p$ and $u$ is an element of $\mathbb{F}_p$ corresponding to $\psi$. Thus
 \begin{align}
     \tau_a &=  \frac{1}{p} \sum_{\psi \in \hat{\mathbb{F}_p}}\psi_0(Tr_{\mathbb{F}_{p^t}/\mathbb{F}_p}(u\epsilon) -ua) \nonumber
     \\
     &= \frac{1}{p} \sum_{u\in \mathbb{F}_p} \hat{\psi_0}(u\epsilon) \psi_0(-ua)
 \end{align}
 where $\hat{\psi_0}$ is the additive character of $\mathbb{F}_{p^t}$ defined by $\hat{\psi_0}(\epsilon) = \psi_0(Tr_{\mathbb{F}_{p^t}/\mathbb{F}_p}(\epsilon))$. In next theorem, we will make major use of the results given below by Wang and Fu \cite{fu wan} in 2014.
% We will make a great use of the following theorem given by D. Wang and L. Fu in 2014 in our next theorem.
 
 \begin{lemma}{\cite[Theorem 5.5]{fu wan}}
 Let $F(x) \in \mathbb{F}_{p^d}(x)$ be a rational function. Write $F(x)= \prod_{j=1}^{k}f_{j}(x)^{r_j}$,
where $f_{j}(x) \in \mathbb{F}_{p^d} [x]$ are irreducible polynomials and $r_j$ are non zero integers. Let $\chi$ be a multiplicative character of $\mathbb{F}_{p^d}$. Suppose that the rational function $\prod_{i=1}^{d-1}f(x^{p^i})$ is not of the form $h(x)^{ord(\chi)} \in \mathbb{F}_{p^d}(x)$, where $ord(\chi)$ is the order of $\chi$, Then we have 
$$ \bigg|\sum_{\epsilon \in \mathbb{F}_p, f(\epsilon) \neq 0,\infty }  \chi(F(\epsilon)) \bigg| \leq \bigg(d\sum _{j=1}^{k}deg(f_j)-1\bigg)p^{\frac{1}{2}}.$$
 \end{lemma}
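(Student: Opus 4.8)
The plan is to recognise this as a Weil-type estimate whose proof rests on the Riemann Hypothesis for curves over finite fields (equivalently, Deligne's bound on Frobenius weights), the only genuinely new feature being that $\epsilon$ ranges over the \emph{subfield} $\mathbb{F}_p$ rather than over all of $\mathbb{F}_{p^d}$; this is exactly why the bound carries $p^{1/2}$ rather than $(p^d)^{1/2}$ and acquires the factor $d$. Write $m=ord(\chi)$, so that $m\mid p^d-1$ and in particular $\gcd(m,p)=1$. The summand $\chi(F(\epsilon))$ is governed by the Kummer cover $y^m=F(x)$ of the projective line, and the whole argument is an analysis of this cover together with the restriction-of-scalars operation that converts a subfield sum into an honest point count over $\mathbb{F}_p$.

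First I would produce a lisse rank-one sheaf (or, classically, an explicit auxiliary curve) over the \emph{base} field $\mathbb{F}_p$ whose Frobenius trace reproduces the summand. Since $F$ has coefficients in $\mathbb{F}_{p^d}$, the assignment $\epsilon\mapsto F(\epsilon)$ is a morphism from an open $U\subseteq\mathbb{A}^1_{\mathbb{F}_p}$ to the Weil restriction $\mathrm{Res}_{\mathbb{F}_{p^d}/\mathbb{F}_p}\mathbb{G}_m$, whose $\mathbb{F}_p$-points are $\mathbb{F}_{p^d}^{*}$. Pulling back the character sheaf attached to $\chi$ along this morphism yields a rank-one sheaf $\mathcal{M}$ on $U$ with $tr(Frob_{p,\epsilon}\mid\mathcal{M})=\chi(F(\epsilon))$ for every $\epsilon\in U(\mathbb{F}_p)$. (Equivalently, one intersects the restriction of scalars of the Kummer curve $y^m=F(x)$ with the locus $x^p=x$ to obtain a curve $C'/\mathbb{F}_p$ whose $\mathbb{F}_p$-points encode the mixed solutions $\{(\epsilon,y):\epsilon\in\mathbb{F}_p,\ y\in\mathbb{F}_{p^d},\ y^m=F(\epsilon)\}$.) The Grothendieck--Lefschetz trace formula then gives
\[
\sum_{\epsilon\in\mathbb{F}_p,\,F(\epsilon)\neq0,\infty}\chi(F(\epsilon))\;=\;\sum_{i=0}^{2}(-1)^{i}\,tr\!\big(Frob_p\mid H^i_c(\mathbb{A}^1_{\overline{\mathbb{F}_p}},\mathcal{M})\big).
\]
Because $\mathcal{M}$ has rank one and $U$ is affine, $H^0_c$ vanishes; under the hypothesis of the lemma $H^2_c$ vanishes as well (see below), so only $H^1_c$ survives, and Deligne's theorem forces every eigenvalue of $Frob_p$ on $H^1_c$ to have absolute value at most $p^{1/2}$.

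It then remains to bound $\dim H^1_c$. Since $m$ is prime to $p$, $\mathcal{M}$ is \emph{tame}, so the Euler--Poincar\'e formula (Grothendieck--Ogg--Shafarevich), or classically Riemann--Hurwitz applied to $C'$, computes the Euler characteristic purely from the branch locus. Over $\overline{\mathbb{F}_p}$ the sheaf $\mathcal{M}$ decomposes as a tensor product of the pullbacks of the $d$ Frobenius-conjugates of $F$, so its finite ramification is supported on the zeros and poles of these conjugates, a set of at most $d\sum_j deg(f_j)$ points. For a geometrically nontrivial tame rank-one sheaf on $\mathbb{A}^1$ with $N$ finite singularities one obtains $\dim H^1_c\le N-1$, whence $\dim H^1_c\le d\sum_j deg(f_j)-1$. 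Combining with the weight bound yields exactly
\[
\Bigl|\sum_{\epsilon\in\mathbb{F}_p,\,F(\epsilon)\neq0,\infty}\chi(F(\epsilon))\Bigr|\;\le\;\Bigl(d\sum_{j=1}^{k}deg(f_j)-1\Bigr)p^{1/2}.
\]

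The hard part is the non-degeneracy input $H^2_c=0$, equivalently that $\mathcal{M}$ is \emph{geometrically nontrivial} (the curve $C'$, or the twist isolating the single character $\chi$, is geometrically irreducible). This is precisely where the stated hypothesis enters: over $\overline{\mathbb{F}_p}$ the pullback sheaf is geometrically constant if and only if the product of the Frobenius-conjugate data is a perfect $m$-th power in $\overline{\mathbb{F}_p}(x)$, and a short computation using the identity $f(x^{p^i})=(\sigma^{-i}f)(x)^{p^i}$ together with $\gcd(p^i,m)=1$ shows that this is equivalent to $\prod_{i}f(x^{p^i})$ being of the form $h(x)^{ord(\chi)}$. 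Ruling this out is exactly what guarantees square-root cancellation; establishing the equivalence cleanly, and tracking the ramification carefully enough to land on the sharp constant $d\sum_j deg(f_j)-1$ rather than a weaker multiple of it, are the two points that require real care. This estimate is the content of Fu and Wan's theorem \cite{fu wan}, and for the present paper it may simply be invoked.
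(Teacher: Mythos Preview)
Your proposal is correct: the paper does not prove this lemma at all but simply cites it as \cite[Theorem 5.5]{fu wan}, and you arrive at the same conclusion in your final sentence. The cohomological sketch you provide is a reasonable outline of how Fu and Wan establish the bound, but for the purposes of this paper no argument is needed beyond the citation.
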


  %\begin{lemma}
 
%Let $F(x) \in \mathbb{F}_{q}(x)$ be a rational function. Write $F(x)= \prod_{j=1}^{k}F_{j}(x)^{r_j}$, where $F_{j}(x) \in \mathbb{F}_q [x]$ are irreducible polynomials and $r_j$ non zero integers. Let $\chi$ be a multiplicative character of $\mathbb{F}_q$  of precise square-free order d(a divisor of q-1). Suppose that $F(x)$ is not of the form $cG(x)^{d}$ for some rational function $G(x) \in \mathbb{F}_{q^m}(x)$ and $c \in  \mathbb{F}_{q}^{*}$. Then we have 
%$$ \bigg|\sum_{\alpha \in \mathbb{F}_q, F(\alpha) \neq \infty}  \chi(F(\alpha)) \bigg| \leq \bigg(\sum _{j=1}^{k}deg(F_j)-1\bigg)q^{\frac{1}{2}}.$$

%\end{lemma}

\begin{lemma}{\cite[Theorem 5.6]{fu wan}}
Let f(x), g(x) $\in \mathbb{F}_{p^t}(x)$ be rational functions. Write f(x) = $\prod_{j=1}^{k} f_j(x)^{r_j}$, where $f_j (x) \in \mathbb{F}_{p^t}[x]$ are irreducible polynomials and $r_j$ are non-zero integers. Let $D_1 = \sum_{j=1}^{k}deg(f_j)$, $D_2 = max\{deg(g) ,0   \}$, $D_3$ is the degree of denominator of g(x) and $D_4  $ is the sum of degrees of those irreducible polynomials dividing denominator of $g$ but distinct from $f_j(x)$( j= 1,2,...,k). Let $\chi$  be a multiplicative character of $\mathbb{F}_{p^t}$, and let $\psi$  be a nontrivial additive character of $\mathbb{F}_{p^t}$. Suppose g(x) is not of the form $v(x)^{p^t}- v(x)$ in $\mathbb{F}_{p^t}(x)$. Then we have 
$$ \bigg|\sum_{\epsilon \in \mathbb{F}_{p^t}, f(\epsilon) \neq 0,\infty, g(\epsilon) \neq \infty}  \chi(f(\epsilon)) \psi(g(\epsilon)) \bigg| \leq  (D_1 + D_2 +D_3 + D_4 -1  )p^{\frac{t}{2}}                 .$$
\end{lemma}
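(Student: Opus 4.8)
The statement is a Weil-type (Riemann Hypothesis) bound for the hybrid character sum
\[
S := \sum_{\substack{\epsilon \in \mathbb{F}_{p^t},\ f(\epsilon)\neq 0,\infty,\ g(\epsilon)\neq\infty}} \chi(f(\epsilon))\,\psi(g(\epsilon)),
\]
which couples a multiplicative character $\chi$ evaluated at the rational function $f$ with an additive character $\psi$ evaluated at $g$. The plan is to realise $S$ as an alternating sum of Frobenius traces on the $\ell$-adic cohomology of a rank-one sheaf on the line, and then to bound the relevant dimension by an explicit conductor computation that produces exactly $D_1+D_2+D_3+D_4-1$. Write $q=p^t$ and let $S_0\subset\mathbb{P}^1_{\mathbb{F}_q}$ be the finite set consisting of the zeros and poles of $f$, the poles of $g$, and the point at infinity. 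On the smooth affine connected curve $U=\mathbb{P}^1\setminus S_0$ the function $\epsilon\mapsto\chi(f(\epsilon))\psi(g(\epsilon))$ is the Frobenius-trace function of the lisse rank-one $\overline{\mathbb{Q}}_\ell$-sheaf $\mathcal{F}=\mathcal{L}_\chi(f)\otimes\mathcal{L}_\psi(g)$, where $\mathcal{L}_\chi(f)$ is the Kummer sheaf attached to $(\chi,f)$ and $\mathcal{L}_\psi(g)$ the Artin--Schreier sheaf attached to $(\psi,g)$; this $\mathcal{F}$ is pure of weight $0$.

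First I would invoke the Grothendieck--Lefschetz trace formula, giving $S=-\sum_i(-1)^i\operatorname{Tr}\!\big(\mathrm{Frob}_q\mid H^i_c(U_{\overline{\mathbb{F}}_q},\mathcal{F})\big)$. Because $\mathcal{F}$ is lisse of rank one on a connected non-complete curve, $H^0_c(U,\mathcal{F})=0$ automatically, while $H^2_c(U,\mathcal{F})$ is the Frobenius-twisted space of geometric coinvariants, which vanishes precisely when $\mathcal{F}$ is geometrically nontrivial. This nontriviality is exactly what the hypothesis on $g$ supplies: if $g=v^{p^t}-v$ then for every $\epsilon\in\mathbb{F}_{p^t}$ one has $g(\epsilon)=v(\epsilon)^{p^t}-v(\epsilon)=0$, so $\psi(g(\epsilon))\equiv1$ and the additive part degenerates, producing a main term of size $q$; excluding this form is the standard criterion guaranteeing that $\mathcal{L}_\psi(g)$, and hence $\mathcal{F}$, is geometrically nontrivial. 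Granting $H^0_c=H^2_c=0$, we obtain $S=-\operatorname{Tr}(\mathrm{Frob}_q\mid H^1_c(U,\mathcal{F}))$, and Deligne's purity theorem bounds each Frobenius eigenvalue on $H^1_c$ by weight $\le1$, i.e.\ in absolute value by $q^{1/2}=p^{t/2}$. Hence $|S|\le \dim H^1_c(U_{\overline{\mathbb{F}}_q},\mathcal{F})\cdot p^{t/2}$, and everything reduces to bounding this dimension.

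The dimension is governed by the Euler--Poincar\'e (Grothendieck--Ogg--Shafarevich) formula. Since $H^0_c=H^2_c=0$, we have $\dim H^1_c=-\chi_c(U_{\overline{\mathbb{F}}_q},\mathcal{F})$, and for a rank-one sheaf this equals $N-2+\sum_{x\in S_0}\mathrm{Sw}_x(\mathcal{F})$, where $N$ is the number of geometric punctures and $\mathrm{Sw}_x$ the Swan conductor. Here I would run the local analysis. At every zero and pole of $f$ the sheaf is tamely ramified, so $\mathrm{Sw}=0$ there, while these points contribute $D_1$ to the puncture count. At every pole of $g$ the Artin--Schreier factor is wildly ramified with Swan conductor at most the pole order, contributing at most $D_3$ from the finite poles and at most $D_2$ from the pole at infinity; the finite poles of $g$ not already among the zeros/poles of $f$ add $D_4$ new punctures, and infinity supplies one further puncture. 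Thus $N\le D_1+D_4+1$ and $\sum_x\mathrm{Sw}_x(\mathcal{F})\le D_2+D_3$, whence $\dim H^1_c\le(D_1+D_4+1)-2+(D_2+D_3)=D_1+D_2+D_3+D_4-1$, which combined with the purity bound yields the asserted inequality.

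The decisive and most delicate step is the conductor bookkeeping: one must cleanly separate the tame contributions (Swan $=0$ at the ramification of the Kummer part) from the wild contributions of the Artin--Schreier part, treat the common point at infinity where both $f$ and $g$ may ramify, and check that the puncture count and Swan sum combine to at most $D_1+D_2+D_3+D_4-1$ (the degenerate case where infinity is not a puncture only improves the bound). The second subtle point is the vanishing of $H^2_c$, for which the precise hypothesis on $g$ is indispensable, as indicated above. An essentially equivalent classical route avoids $\ell$-adic machinery: one forms the Artin $L$-function of the Hecke character $\epsilon\mapsto\chi(f(\epsilon))\psi(g(\epsilon))$ on the rational function field $\mathbb{F}_q(x)$, shows by class field theory that it is a polynomial whose degree is bounded by $\deg(\text{conductor})-2$, and applies the Riemann Hypothesis for curves to place its inverse roots on $|z|=q^{1/2}$; expanding $-\tfrac{d}{dT}\log L$ then exhibits $S$ as a power sum of these roots and gives the same estimate. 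In either formulation the crucial inputs are Weil's or Deligne's purity together with the explicit conductor computation described in the third paragraph.
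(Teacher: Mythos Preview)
The paper does not supply a proof of this lemma; it is quoted from Fu and Wan (reference~[6], their Theorem~5.6) and used as a black box in the proof of Theorem~3.1. There is therefore no in-paper argument to compare against. Your sketch via the Grothendieck--Lefschetz trace formula, Deligne's purity, and the Grothendieck--Ogg--Shafarevich conductor formula is the standard route to such hybrid Weil bounds and is, in substance, how Fu and Wan themselves proceed.

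One technical point in your outline is not quite right. You assert that the hypothesis ``$g\neq v^{p^t}-v$ in $\mathbb{F}_{p^t}(x)$'' guarantees that $\mathcal{L}_\psi(g)$, and hence $\mathcal{F}$, is geometrically nontrivial, so that $H^2_c$ vanishes. But geometric triviality of $\mathcal{L}_\psi(g)$ is decided over $\overline{\mathbb{F}}_{p^t}(x)$, not over $\mathbb{F}_{p^t}(x)$, and the two conditions differ: for any nonzero constant $g=c\in\mathbb{F}_{p^t}^{\ast}$ there is no $v\in\mathbb{F}_{p^t}(x)$ with $v^{p^t}-v=c$, yet $c=\alpha^{p^t}-\alpha$ for some constant $\alpha\in\overline{\mathbb{F}}_{p^t}$, so $\mathcal{L}_\psi(c)$ is geometrically constant. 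Taking in addition $\chi$ trivial then gives $|S|=p^t-O(1)$, which violates the stated bound $(D_1-1)p^{t/2}$. The fix is either to read the hypothesis over the algebraic closure, or---and this suffices for every application in the present paper---to observe that the functions $F_2$ arising in the proof of Theorem~3.1 always carry a genuine pole, forcing wild ramification of $\mathcal{L}_\psi(F_2)$ and hence geometric nontriviality of $\mathcal{F}$. With that caveat in place, your local conductor bookkeeping yielding $\dim H^1_c\le D_1+D_2+D_3+D_4-1$ is correct.
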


Evidently, both the sufficient condition (Theorem 3.1) and its sieving variation (Theorem 3.4) are entirely dependent on $p^t$ and the degrees of the numerator and denominator polynomials of the rational function. It is easy to see that the Trace part of the main result in \cite{hariom 2}  is a special case of our finding for $f(x) = \frac{1}{x}$.
%It is easy to note that Trace part of the main result in \cite{hariom 2} is a special case of our result for $f(x) = \frac{1}{x}$.
 
For every $\kappa \in \mathbb{N}$, we will use $\omega(\kappa)$ to represent the number of distinct prime divisors of $\kappa$, and $W(\kappa)$ to represent the number of square free divisors of $\kappa$. Clearly, $W(\kappa) =2^{\omega(\kappa)}$.
		
\section{Sufficient Condition}\label{2}
  Let $k_1, k_2,p,t \in \mathbb{N}$ be such that $p$ is a prime power and $k_1$, $k_2$ are positive integers which divide $p^t -1.$ Let $a, b\in \mathbb{F}_p ,\; f(x) \in  R_{p,t}(n)$. 
    Let $A_{f,a,b}(k_1,k_2)$  represents the set consisting of  all those elements $\epsilon \in \mathbb{F}_{p^t}$ such that $\epsilon$ is $k_1$-free, $f(\epsilon)$ is $k_2$-free,  $Tr_{\mathbb{F}_{p^t}/\mathbb{F}_p}(\epsilon) = a$, and $Tr_{\mathbb{F}_{p^t}/\mathbb{F}_t}(f(\epsilon)) = b$.
\par
We now verify the sufficient condition as follows:

\begin{theorem}
Suppose t, n, p $\in \mathbb{N}$ and $p$ is a prime power. Suppose that 
$$ p^{\frac{t}{2}-2} > (2n+1) W(p^t -1)^2.$$
Then $(p,t) \in A_n.$
\end{theorem}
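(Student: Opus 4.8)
The strategy is the usual character-sum count, specialised to $k_1=k_2=p^t-1$ so that ``$k_i$-free'' means ``primitive''. Fix $f=f_1/f_2\in R_{p,t}(n)$ and $a,b\in\mathbb F_p$, and set
\[
N=\sum_{\substack{\epsilon\in\mathbb F_{p^t}^{*}\\ f_1(\epsilon)f_2(\epsilon)\neq 0}}\rho_{p^t-1}(\epsilon)\,\rho_{p^t-1}\!\big(f(\epsilon)\big)\,\tau_a(\epsilon)\,\tau_b\!\big(f(\epsilon)\big),
\]
the restriction on $\epsilon$ ensuring that both $\epsilon$ and $f(\epsilon)$ are nonzero and finite. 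By Lemma 2.1 together with the definitions (2.1) and (2.3), $N=|A_{f,a,b}(p^t-1,p^t-1)|$, so it suffices to prove $N>0$. Expanding $\rho$ and $\tau$, using that $\widehat{\psi_0}$ is additive, and interchanging summations, $N$ equals $\theta(p^t-1)^2p^{-2}$ times
\[
\sum_{d_1,d_2\mid p^t-1}\frac{\mu(d_1)\mu(d_2)}{\phi(d_1)\phi(d_2)}\sum_{\chi_{d_1},\chi_{d_2}}\ \sum_{u_1,u_2\in\mathbb F_p}\psi_0(-u_1a-u_2b)\,S(d_1,d_2,\chi_{d_1},\chi_{d_2},u_1,u_2),
\]
where $S=\sum_{\epsilon}\chi_{d_1}(\epsilon)\chi_{d_2}(f(\epsilon))\,\widehat{\psi_0}\big(u_1\epsilon+u_2f(\epsilon)\big)$ with the same restriction on $\epsilon$, and $\chi_{d_i}$ runs over the order-$d_i$ multiplicative characters.

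Next I would isolate the principal term $(d_1,d_2,u_1,u_2)=(1,1,0,0)$, for which $S=p^t-c$, where $c$ is a bounded number of excluded points (namely $\epsilon=0$ and the at most one root in $\mathbb F_{p^t}$ of each of the irreducibles $f_1,f_2$, such a root existing only in degree one); this contributes essentially $\theta(p^t-1)^2p^{t-2}$. For every other quadruple I would bound $|S|$ by $(2n+1)p^{t/2}$. If $(u_1,u_2)\neq(0,0)$, write $\chi_{d_1}(\epsilon)\chi_{d_2}(f(\epsilon))=\chi\big(\epsilon^{m_1}f_1(\epsilon)^{m_2}f_2(\epsilon)^{-m_2}\big)$ for a fixed generator $\chi$ of $\widehat{\mathbb F_{p^t}^{*}}$ and integers $m_1,m_2$, and note that $g(x)=u_1x+u_2f(x)=\big(u_1xf_2(x)+u_2f_1(x)\big)/f_2(x)$ is a nonzero rational function of degree far below $p^t$, hence not of the form $v(x)^{p^t}-v(x)$; Lemma 2.4 then yields $|S|\le(D_1+D_2+D_3+D_4-1)p^{t/2}$, and a case analysis on the degrees (according to whether $m_1$ or $m_2$ vanishes, whether $u_2=0$, and on $n_1=\deg f_1$ versus $n_2=\deg f_2$) shows $D_1+D_2+D_3+D_4-1\le 2n+1$ in every case, with equality attainable when $n_1=0$. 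If instead $(u_1,u_2)=(0,0)$ but $(d_1,d_2)\neq(1,1)$, then $g\equiv 0$ and Lemma 2.4 is inapplicable; here Lemma 2.3 (Weil's estimate for multiplicative character sums) gives $|S|\le n\,p^{t/2}\le(2n+1)p^{t/2}$, provided $x^{m_1}f_1(x)^{m_2}f_2(x)^{-m_2}$ is not a perfect $\mathrm{ord}(\chi_{d_1}\chi_{d_2})$-th power in $\mathbb F_{p^t}(x)$, which follows from $f_1,f_2$ being distinct coprime irreducibles (with $f_i=x$ or a constant $f_i$ treated directly).

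Finally I would assemble the estimate. Since $\sum_{d\mid p^t-1}\tfrac{|\mu(d)|}{\phi(d)}\sum_{\chi_d}1=W(p^t-1)$ and $(u_1,u_2)$ ranges over $p^2$ values, the non-principal quadruples contribute, in the weighted sum, at most $\big(W(p^t-1)^2p^2-1\big)(2n+1)p^{t/2}$ in absolute value, so
\[
\frac{p^2}{\theta(p^t-1)^2}\,N\ \ge\ (p^t-c)-\big(W(p^t-1)^2p^2-1\big)(2n+1)p^{t/2}.
\]
This is positive once $p^t-c>W(p^t-1)^2p^2(2n+1)p^{t/2}$, which, after absorbing the bounded quantity $c$, reduces to the hypothesis $p^{t/2-2}>(2n+1)W(p^t-1)^2$. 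Hence $N>0$, so some $\epsilon\in\mathbb F_{p^t}$ has all four required properties, i.e. $(p,t)\in A_n$.

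The step I expect to be the main obstacle is the uniform degree bound $D_1+D_2+D_3+D_4-1\le 2n+1$ feeding Lemma 2.4: one must track precisely which irreducible polynomials occur in the numerator and denominator of $g$ and in the multiplicative argument, because $D_4$ (the degrees of denominator factors of $g$ not already among the $f_j$) changes according to whether $\chi_{d_2}$ is trivial, and because the estimate is genuinely tight when $\deg f_1=0$. A secondary technical point is the ``not a perfect power'' verification needed to invoke Lemma 2.3, where the degenerate shapes $f_1=x$, $f_2=x$, or a constant $f_i$ must be ruled out by inspection.
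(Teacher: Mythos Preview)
Your proposal is correct and follows essentially the same route as the paper: expand the count via characters, isolate the principal term, bound every other mixed sum by $(2n+1)p^{t/2}$ using the Fu--Wan estimates (the paper's Lemmas~2.2 and~2.3, which you cite as 2.3 and 2.4), and compare. The paper rules out $g=v^{p^t}-v$ by an explicit divisibility argument on $f_2$ rather than your degree count, but both are valid, and the degree case-split $n_1\le n_2$ versus $n_1>n_2$ you anticipate is exactly what the paper does.
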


\begin{proof}
%To prove this result it is enough to show that $N_{f,a,b}(l_1, l_2) >0$ for every $f(x) \in R_{q,m}(n_1,n_2)$ and prescribed $a,b \in \mathbb{F}_q$.
In order to prove this result it suffices to demonstrate that $A_{f,a,b}(k_1, k_2) >0$ for every $f(x) \in R_{p,t}(n)$ and for every prescribed $a,b \in \mathbb{F}_p$ .
Suppose that $f(x) \in R_{p,t}(n)$ be a rational function and that $a,b \in \mathbb{F}_p$. Let $P$ represent the collection of zeroes and poles of $f(x)  \in \mathbb{F}_{p^t}$ and $ P^{'} = P \cup \{ 0\}$.   Let $k_1, k_2$ be divisors of $p^t -1$. Then by definition, $A_{f,a,b}(k_1,k_2)$ will be given by 
    
          $$A_{f,a,b}(k_1,k_2) = \sum_{\epsilon \in \mathbb{F}_{p^t}-P^{'}} \rho_{k_1}(\epsilon) \rho_{k_2}(f(\epsilon)) \tau_a(\epsilon) \tau_b(f(\epsilon)).      $$
          Using the characteristic functions $(1)$ and $(3)$ defined in the previous section, we obtain
%Using the above defined characteristics functions $(1)$ and $(2)$, we have      
           $$A_{f,a,b}(k_1,k_2) =\frac{\theta(k_1) \theta(k_2)}{p^2} 
    \sum_{s_1 | k_1, s_2 | k_2} \frac{\mu(s_1) \mu(s_2)}{\phi(s_1)\phi(s_2)} \sum_{s_1,s_2} \chi_{f,a,b}(s_1,s_2)$$
    where $\theta(k_i) = \dfrac{\phi(k_i)}{k_i};\; i=1,2$ and
    $$\chi_{f,a,b}(s_1,s_2) = \sum_{u,v \in \mathbb{F}_{p}}\psi_0(-au-bv) \sum_{\epsilon \in \mathbb{F}_{p^t}-P^{'}}\chi_{s_1}(\epsilon)\chi_{s_2}(\epsilon_0) \hat{\psi_0}(u \epsilon + v \epsilon_0)$$
    where $\epsilon_0 = f(\epsilon)$. 
    It follows from \cite[Example 5.1]{rudolf} that, for any divisors $s_1 ,\; s_2$ of $p^t -1$, there exist integers $m_1, \;m_2$ with $0< m_1,\; m_2 < p^t -1$ such that $\chi_{s_1}(x) = \chi_{p^t -1}(x^{m_1})$ and $\chi_{s_2}(x) = \chi_{p^t -1}(x^{m_2})$. Thus
    \begin{align}
        \chi_{f,a,b}(s_1,s_2) &= \sum_{u,v \in \mathbb{F}_{p}}\psi_0(-au-bv) \sum_{\epsilon \in \mathbb{F}_{p^t}-P^{'}} \chi_{p^t -1}(\epsilon^{m_1} f(\epsilon)^{m_2} ) \hat{\psi_0}(u \epsilon + v \epsilon_0) 
        \\
        &=\sum_{u,v \in \mathbb{F}_{p}}\psi_0(-au-bv) \sum_{\epsilon \in \mathbb{F}_{p^t}-P^{'}} \chi_{p^t -1}(F_1(\epsilon) ) \hat{\psi_0}(F_2(\epsilon)),
    \end{align}
where $F_1(x) = x^{m_{1}}f(x)^{m_2} \in \mathbb{F}_{p^t}(x)$ and $F_2(x) = ux + vf(x) \in \mathbb{F}_{p^t}(x).$

%Following the idea from \cite{a.gupta 2019}
First we consider the situation when $F_2(x) = l(x)^{p^t} - l(x)$ for some $l(x) \in \mathbb{F}_{p^t}(x)$, where $l(x)  = \dfrac{l_1(x)}{l_2(x)}$ with $(l_1, l_2) =1$. We have,
$ux+v\dfrac{f_1(x)}{f_2(x)} = \dfrac{l_1(x)^{p^t}}{l_2(x)^{p^t}} - \dfrac{l_1(x)}{l_2(x)}$, that is, 
$$f_2(x) (l_1(x)^{p^t} - l_1(x) l_2(x)^{p^t -1}) = l_2(x)^{p^t}  (u x f_{2}(x) +v f_{1}(x)   ).$$
Since $  (l_1(x)^{p^t} - l_1(x) l_2(x)^{p^t -1} , l_2(x)^{p^t}     ) = 1$, it implies that,  $l_2(x)^{p^t}$ divides $f_2(x)$, which can only happen if $l_2(x)$ is constant. That is, we have
$$c^{-(p^t)}f_2(x) (l_1(x)^{p^t} - l_1(x) c^{p^t -1}) =   u x f_{2}(x) +v f_{1}(x)$$
where $c= l_2$. Now, the above equation only applies if $v=0$. Substituting it to the equation above yields, $c^{-(p^t)} (l_1(x)^{p^t} - l_1(x) c^{p^t -1}) =   u x$, which can happen only if  $l_1$ is constant and $u=0$. Moreover, if $F_1(x) \neq r(x)^{p^t -1}$ for any $r(x) \in \mathbb{F}_{p^t}(x)$, then it follows form Lemma 2.2 that
\begin{align}
    |\chi_{f,a,b}(s_1,s_2)| \leq n p^{\frac{t}{2}+2}.
\end{align}
And, when  $F_1(x) = r(x)^{p^t -1}$ for some $r(x) \in \mathbb{F}_{p^t}(x)$,  where $r(x)  = \frac{r_1(x)}{r_2(x)}$ is such that $(r_1, r_2) =1$. Following \cite{hariom 2}, it happens only if $m_1 = m_2 = 0$, a contradiction.

If $F_2(x) \neq d(x)^{p^t} - d(x)$ for any $d(x) \in \mathbb{F}_{p^t}(x)$ then, 
\\
\textit{Case 1 :} When $n_1 \leq n_2$. Then in accordance with Lemma 2.3 we have $D_2$ = 1, and 
\begin{align}
    | \chi_{f,a,b}(s_1,s_2)| \leq (2n+1) p^{\frac{t}{2}+2}.
\end{align}
\textit{Case 2 :} When $n_1 > n_2$. We have $D_2 = n_1 - n_2$ and 
\begin{align}
    | \chi_{f,a,b}(s_1,s_2)| \leq 2n p^{\frac{t}{2} +2}.
\end{align}
Thus, if $(\chi_{s_1}, \chi_{s_2} , u, v   ) \neq (\chi_1,\chi_1,0,0)$ then  based on the discussion above, and using $(6),(7)$ and $(8)$, we get, $| \chi_{f,a,b}(s_1,s_2)| \leq (2n+1) p^{\frac{t}{2}+2}.$
%Thus from above discussion together $(5),(6)$ and $(7)$, we get if $(\chi_{d_1}, \chi_{d_2} , u, v   ) \neq (\chi_1,\chi_1,0,0)$ then $| \chi_{f,a,b}(d_1,d_2)| \leq (2n+1) q^{\frac{m}{2}+2}.$
From this and the definition of $A_{f,a,b}(k_1,k_2)$, we get
\begin{align}
    A_{ f,a,b}(k_1,k_2) &\geq \frac{\theta(k_1) \theta(k_2)}{p^2} ((p^t - |P^{'}|) - (2n+1)p^{\frac{t}{2}+2}(W(k_1)W(k_2)-1))
    \\
    &\geq \frac{\theta(k_1) \theta(k_2)}{p^2} ((p^t - (n+1)) - (2n+1)p^{\frac{t}{2}+2}(W(k_1)W(k_2)-1))
    %\\
   % &\geq \frac{\theta(l_1) \theta(l_2)}{q^2}(q^m -(2n+1)q^{\frac{m}{2}+2}W(l_1)W(l_2))
\end{align}
Therefore, if $p^{\frac{t}{2}-2}> (2n+1)W(k_1)W(k_2) $, then $A_{f,a,b}(k_1,k_2) >0$ for every $f(x) \in R_{p,t}(n)$ and prescribed $a,b \in \mathbb{F}_{p}$. Considering $k_1 = k_2 = p^t -1$, result follows.
\end{proof}

Now, we provide the bounds for the absolute values for $A_{f,a,b}(mk,k)-\theta(m)A_{f,a,b}(k,k)$ and $A_{f,a,b}(k,mk)-\theta(m)A_{f,a,b}(k,k).$ Proofs are omitted as they follow from the idea of \cite{a.gupta cohen 2018}.

\begin{lemma}
Let $k$ be a positive integer that divides $p^t -1$ and $m$ is a prime dividing $p^t -1$ but not $k$. Then
$$|A_{f,a,b}(mk,k)-\theta(m)A_{f,a,b}(k,k)| \leq \frac{\theta(k)^2 \theta(m)}{p^2}(2n+1)W(k)^2 p^{\frac{t}{2}+2}  $$
and
$$|A_{f,a,b}(k,mk)-\theta(m)A_{f,a,b}(k,k)| \leq \frac{\theta(k)^2 \theta(m)}{p^2}(2n+1)W(k)^2 p^{\frac{t}{2}+2}.  $$
%$$|N_{f,a,b}(l,sl)-\theta(s)N_{f,a,b}(l,l)| \leq \frac{\theta(l)^2 \theta(s)}{q^2}(2n+1)W(l)^2 q^{\frac{m}{2}+2}. $$
\end{lemma}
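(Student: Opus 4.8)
The plan is to adapt the sieving argument of \cite{a.gupta cohen 2018}. The elementary facts I would use are: since $m$ is a prime dividing $p^t-1$ but not $k$, every squarefree divisor of $mk$ is either a squarefree divisor $s$ of $k$ or of the form $ms'$ with $s'$ a squarefree divisor of $k$; moreover $\theta(mk)=\theta(m)\theta(k)$, $\phi(ms')=(m-1)\phi(s')$ and $\mu(ms')=-\mu(s')$. First I would write out $A_{f,a,b}(mk,k)$ exactly as in the proof of Theorem 3.1,
$$A_{f,a,b}(mk,k)=\frac{\theta(mk)\theta(k)}{p^{2}}\sum_{s_1\mid mk,\ s_2\mid k}\frac{\mu(s_1)\mu(s_2)}{\phi(s_1)\phi(s_2)}\sum_{\chi_{s_1},\chi_{s_2}}\chi_{f,a,b}(s_1,s_2),$$
and split the outer sum according to whether $m\mid s_1$. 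Using $\theta(mk)=\theta(m)\theta(k)$, the terms with $s_1\mid k$ reassemble exactly into $\theta(m)A_{f,a,b}(k,k)$, so that
$$A_{f,a,b}(mk,k)-\theta(m)A_{f,a,b}(k,k)=\frac{\theta(m)\theta(k)^{2}}{p^{2}}\sum_{s'\mid k,\ s_2\mid k}\frac{\mu(ms')\mu(s_2)}{\phi(ms')\phi(s_2)}\sum_{\chi_{ms'},\chi_{s_2}}\chi_{f,a,b}(ms',s_2).$$

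Next I would bound each inner character sum uniformly. Since $\chi_{ms'}$ has order divisible by the prime $m$, it is nontrivial; in the notation of the proof of Theorem 3.1 this forces the exponent $m_1\neq 0$, so $F_1(x)=x^{m_1}f(x)^{m_2}$ is never a perfect $(p^t-1)$-th power and the degenerate tuple $(\chi_1,\chi_1,0,0)$ can never occur among these summands. Hence the estimates $(6)$--$(8)$ apply verbatim and $|\chi_{f,a,b}(ms',s_2)|\le(2n+1)p^{t/2+2}$ for every choice of characters and of $u,v\in\mathbb{F}_p$. Inserting this, using $|\mu(\cdot)|\le 1$, and observing that there are exactly $\phi(ms')$ characters of order $ms'$ and $\phi(s_2)$ of order $s_2$ so that the factors $\phi(ms')\phi(s_2)$ in the denominator cancel against these counts, while only squarefree $s',s_2$ survive the M\"obius factors (at most $W(k)^{2}$ surviving pairs), I obtain
$$|A_{f,a,b}(mk,k)-\theta(m)A_{f,a,b}(k,k)|\le \frac{\theta(m)\theta(k)^{2}}{p^{2}}(2n+1)W(k)^{2}p^{t/2+2},$$
which is the first inequality. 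The second follows from the same computation carried out on the second coordinate: one splits $s_2\mid mk$, reassembles the $s_2\mid k$ part into $\theta(m)A_{f,a,b}(k,k)$, and bounds the residual sum using that $\chi_{ms'}$ — now occupying the second slot — is still nontrivial, so no degenerate term arises and $|\chi_{f,a,b}(s_1,ms')|\le(2n+1)p^{t/2+2}$ again holds.

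The main obstacle — really the only point demanding care — is confirming that the degenerate summand $\chi_{f,a,b}(\chi_1,\chi_1)$ with $u=v=0$ genuinely cannot appear among the new terms, so that the single clean bound $(2n+1)p^{t/2+2}$ covers every one of them, together with the bookkeeping that makes $\phi(ms')\phi(s_2)$ cancel against the number of characters of those orders; beyond that, the argument is purely a reorganization of the character sum already estimated in Theorem 3.1.
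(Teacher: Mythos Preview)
Your proposal is correct and follows exactly the approach the paper itself indicates: the paper omits the proof entirely, stating only that it ``follows from the idea of \cite{a.gupta cohen 2018}'', and what you have written is precisely that argument spelled out --- split the divisor sum for $A_{f,a,b}(mk,k)$ according to whether $m\mid s_1$, recognize the $s_1\mid k$ part as $\theta(m)A_{f,a,b}(k,k)$, and bound the remaining $W(k)^2$ terms via the estimate $|\chi_{f,a,b}(ms',s_2)|\le(2n+1)p^{t/2+2}$, which is legitimate because $\chi_{ms'}$ is never trivial. There is nothing to add.
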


\begin{lemma}
Let $k$ be a positive integer that divides $p^t -1$ and $\{q_1,q_2,\dots, q_m     \}$ be the collection of all primes dividing $p^t -1$ but not $k$. Then 
$$ A_{f,a,b}(p^t -1, p^t -1) \geq \sum_{i=1}^{m} A_{f,a,b}(k,q_i k) +  \sum_{i=1}^{m} A_{f,a,b}(q_ik,k) -(2m-1) A_{f,a,b,}(k,k).$$
\end{lemma}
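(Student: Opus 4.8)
The plan is to establish the inequality by a standard inclusion–exclusion argument over the prime divisors of $p^t-1$ that do not already divide $k$, exploiting the fact that the characteristic function $\rho_u$ is multiplicative in $u$ in the appropriate sense. First I would record the key sieving identity: if $k$ divides $p^t-1$ and $q_1,\dots,q_m$ are the distinct primes dividing $p^t-1$ but not $k$, then an element $\zeta\in\mathbb{F}_{p^t}^*$ is $(p^t-1)$-free if and only if it is $k$-free and simultaneously $q_ik$-free for every $i=1,\dots,m$; equivalently, writing $\mathcal{F}(u)$ for the set of $u$-free elements, we have the set identity $\mathcal{F}(p^t-1)=\mathcal{F}(k)\cap\bigcap_{i=1}^m\mathcal{F}(q_ik)$. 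This is because the radical of $p^t-1$ is exactly the product of the primes dividing $k$ together with $q_1,\dots,q_m$, and $u$-freeness depends only on the squarefree part of $u$.

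Next I would translate this into counting with the two independent freeness conditions — one on $\epsilon$ (governed by $k_1$) and one on $f(\epsilon)$ (governed by $k_2$) — each subject additionally to the two trace conditions, which are exactly the elements counted by $A_{f,a,b}(\cdot,\cdot)$. Applying the one-variable sieve to the condition on $\epsilon$ (keeping $k_2=k$ fixed) and then to the condition on $f(\epsilon)$ (keeping $k_1=k$ fixed), and combining, one gets that an element contributes to $A_{f,a,b}(p^t-1,p^t-1)$ precisely when it contributes to $A_{f,a,b}(k,k)$ and to each $A_{f,a,b}(q_ik,k)$ and each $A_{f,a,b}(k,q_ik)$. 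The Bonferroni / inclusion–exclusion bound for the intersection of $2m$ events inside the ambient set counted by $A_{f,a,b}(k,k)$ then yields
\[
A_{f,a,b}(p^t-1,p^t-1)\ \geq\ \sum_{i=1}^m A_{f,a,b}(q_ik,k)+\sum_{i=1}^m A_{f,a,b}(k,q_ik)-(2m-1)A_{f,a,b}(k,k),
\]
since a lower bound for $|\bigcap_{j=1}^{N} X_j|$ in terms of a common superset $S$ is $\sum_j |X_j| - (N-1)|S|$, here with $N=2m$ and $S$ the set behind $A_{f,a,b}(k,k)$. Care is needed that all the sets $A_{f,a,b}(q_ik,k)$ and $A_{f,a,b}(k,q_jk)$ are genuinely subsets of the set behind $A_{f,a,b}(k,k)$ (which holds because $q_ik$-free implies $k$-free), so that the elementary bound applies directly to the indicator functions summed over $\mathbb{F}_{p^t}\setminus P'$.

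The main obstacle I anticipate is purely bookkeeping rather than conceptual: one must be scrupulous that the sieve on the $\epsilon$-side and the sieve on the $f(\epsilon)$-side use \emph{disjoint} sets of primes beyond $k$ in the right way, i.e. that the combined family of $2m$ freeness conditions indexed by $(q_i,\text{left})$ and $(q_j,\text{right})$ are the correct ones whose simultaneous satisfaction (together with $k$-freeness on both sides) is equivalent to full primitivity on both sides. Once the set-theoretic equivalence $\mathcal{F}(p^t-1)=\mathcal{F}(k)\cap\bigcap_i\mathcal{F}(q_ik)$ is in hand for each side separately, the inequality is immediate from summing indicator functions and the trivial bound $\mathbf{1}_{\bigcap X_j}\ge \sum_j \mathbf{1}_{X_j}-(N-1)\mathbf{1}_S$ valid pointwise when each $X_j\subseteq S$. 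I would present it exactly in that order: state the freeness/radical equivalence, note the containment of the sieved sets in the base set, invoke the pointwise Bonferroni inequality, and sum over $\epsilon\in\mathbb{F}_{p^t}\setminus P'$.
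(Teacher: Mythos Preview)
Your argument is correct: the set-theoretic identity $\mathcal{F}(p^t-1)=\bigcap_{i=1}^m \mathcal{F}(q_ik)$ on each side, together with the elementary Bonferroni bound $\bigl|\bigcap_{j=1}^{N}X_j\bigr|\ge \sum_{j}|X_j|-(N-1)|S|$ for $X_j\subseteq S$, yields the stated inequality with $N=2m$ and $S$ the set underlying $A_{f,a,b}(k,k)$. The paper itself omits the proof entirely, simply citing \cite{a.gupta cohen 2018} for the idea; your Bonferroni/inclusion--exclusion approach is exactly the standard Cohen--Huczynska-type sieve that reference encodes, so there is no divergence to discuss.
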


Sieve variation of sufficient condition (Theorem 3.1) is
given below, proof of which is not given as it follows from  Lemmas 3.2, 3.3 and ideas in  \cite{a.gupta cohen 2018}.

\begin{theorem}
Let $t,n,p,k\in \mathbb{N}$ be such that $k$  divides $p^t -1 $, where $p$ is a prime power. Assume $\{q_1,q_2,\dots, q_m     \}$ is the collection of all those primes that divide $p^t -1$ but not  $k$. Suppose $\delta = 1 - 2 \sum_{i=1}^{m} \dfrac{1}{q_i}$ and $\Delta = \dfrac{2m-1}{\delta} +2$. If $\delta >0$ and 
$$ p^{\frac{t}{2}-2} > (2n+1) \Delta W(k)^2$$
then $(p,t) \in A_n.$
\end{theorem}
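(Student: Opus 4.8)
The plan is to carry out the standard Cohen-type sieve assembly, exactly along the lines of the analogous result in \cite{a.gupta cohen 2018}: feed the inclusion--exclusion inequality of Lemma 3.3 with the perturbation estimates of Lemma 3.2 and with the single lower bound for $A_{f,a,b}(k,k)$ already produced inside the proof of Theorem 3.1. Fix any $f(x)\in R_{p,t}(n)$ and prescribed $a,b\in\mathbb{F}_p$; as in Theorem 3.1 it suffices to show $A_{f,a,b}(p^t-1,p^t-1)>0$, since this number counts precisely the $\epsilon\in\mathbb{F}_{p^t}$ for which $(\epsilon,f(\epsilon))$ is a primitive pair with $Tr_{\mathbb{F}_{p^t}/\mathbb{F}_p}(\epsilon)=a$ and $Tr_{\mathbb{F}_{p^t}/\mathbb{F}_p}(f(\epsilon))=b$, and $f,a,b$ are arbitrary.

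First I would apply Lemma 3.2 to each summand in Lemma 3.3, writing $A_{f,a,b}(q_ik,k)=\theta(q_i)A_{f,a,b}(k,k)+\eta_i$ and $A_{f,a,b}(k,q_ik)=\theta(q_i)A_{f,a,b}(k,k)+\eta_i'$ with $|\eta_i|,|\eta_i'|\le \frac{\theta(k)^2\theta(q_i)}{p^2}(2n+1)W(k)^2p^{t/2+2}$. Summing, the coefficient of $A_{f,a,b}(k,k)$ becomes $2\sum_{i=1}^m\theta(q_i)-(2m-1)=1-2\sum_{i=1}^m 1/q_i=\delta$, and, since $\sum_{i=1}^m\theta(q_i)=m-\sum_{i=1}^m 1/q_i=\tfrac12(2m-1+\delta)$, the accumulated error is at most $\frac{\theta(k)^2}{p^2}(2n+1)W(k)^2p^{t/2+2}(2m-1+\delta)$. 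Hence
\[
A_{f,a,b}(p^t-1,p^t-1)\ \ge\ \delta\,A_{f,a,b}(k,k)-\frac{\theta(k)^2}{p^2}(2n+1)W(k)^2p^{t/2+2}(2m-1+\delta).
\]

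Next I would insert $A_{f,a,b}(k,k)\ge \frac{\theta(k)^2}{p^2}\bigl((p^t-(n+1))-(2n+1)(W(k)^2-1)p^{t/2+2}\bigr)$, which is the $k_1=k_2=k$ instance of bound (10) established in the proof of Theorem 3.1. Collecting terms and using the identity $\delta\Delta=2m-1+2\delta$, so that $\delta(W(k)^2-1)+(2m-1+\delta)W(k)^2=\delta\Delta W(k)^2-\delta$, everything collapses to
\[
A_{f,a,b}(p^t-1,p^t-1)\ \ge\ \frac{\delta\,\theta(k)^2}{p^2}\Bigl((p^t-(n+1))+(2n+1)p^{t/2+2}-(2n+1)\Delta W(k)^2p^{t/2+2}\Bigr).
\]
Since $\delta>0$ and $\theta(k)>0$, and since $(2n+1)p^{t/2+2}\ge 2n+1\ge n+1$, the bracket is at least $p^t-(2n+1)\Delta W(k)^2p^{t/2+2}$, which is positive precisely when $p^{t/2-2}>(2n+1)\Delta W(k)^2$. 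That is the hypothesis, so $A_{f,a,b}(p^t-1,p^t-1)>0$ and therefore $(p,t)\in A_n$.

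There is no new analytic difficulty: the heavy lifting (the Weil-type bounds of Lemmas 2.2--2.3, the estimate (10), and the difference bounds of Lemma 3.2) is already done. The only step needing attention is the arithmetic bookkeeping in the second paragraph --- in particular tracking the weighted count $\sum_{i=1}^m\theta(q_i)$ exactly rather than crudely replacing it by $m$, since it is exactly this refinement that produces the sharp constant $\Delta=\frac{2m-1}{\delta}+2$; a coarser estimate would yield a strictly larger $\Delta$ and hence a weaker sufficient condition. This routineness is presumably why the authors chose to omit the proof.
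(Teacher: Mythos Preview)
Your proposal is correct and proceeds exactly as the paper indicates: the paper omits the proof entirely, stating only that it follows from Lemmas~3.2 and~3.3 together with the ideas of \cite{a.gupta cohen 2018}, and your assembly---feeding Lemma~3.2 into Lemma~3.3, inserting bound~(10) from the proof of Theorem~3.1 for $A_{f,a,b}(k,k)$, and simplifying via $\delta\Delta=2m-1+2\delta$---is precisely that standard argument carried out in full.
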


\begin{lemma}
Suppose that $\kappa  \in \mathbb{N}$ is such that $\omega(\kappa) \geq 1547 $, then $W(\kappa) \leq \kappa^{1/12}$.
\end{lemma}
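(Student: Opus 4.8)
The plan is to strip the statement down to an inequality about products of the smallest primes and then to a single numerical check. Write $\kappa=\ell_1^{a_1}\cdots\ell_s^{a_s}$ with $\ell_1<\cdots<\ell_s$ the distinct prime divisors of $\kappa$, so that $s=\omega(\kappa)\ge 1547$ and $W(\kappa)=2^{s}$. Since each $a_i\ge 1$ and the $i$-th smallest prime divisor $\ell_i$ is at least the $i$-th prime $p_i$, we get $\kappa\ge \ell_1\cdots\ell_s\ge\prod_{i=1}^{s}p_i$. Hence it suffices to prove $\prod_{i=1}^{s}p_i\ge 2^{12s}$ for every integer $s\ge 1547$, for then $\kappa^{1/12}\ge\bigl(\prod_{i=1}^{s}p_i\bigr)^{1/12}\ge 2^{s}=W(\kappa)$.

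Next I would pass to logarithms: set $h(s)=\sum_{i=1}^{s}\log_2 p_i-12s$, so the goal is $h(s)\ge 0$ for all $s\ge 1547$. The key observation is monotonicity: $h(s+1)-h(s)=\log_2 p_{s+1}-12$, which is positive exactly when $p_{s+1}>2^{12}=4096$, i.e. when $s+1>\pi(4096)=564$ (here $\pi$ denotes the prime-counting function). Thus $h$ is strictly increasing on the integers $\ge 564$, and in particular on $\{1547,1548,\dots\}$, so the entire claim collapses to the single inequality $h(1547)\ge 0$, that is, $\prod_{i=1}^{1547}p_i\ge 2^{12\cdot 1547}=2^{18564}$.

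Finally I would verify this boundary inequality directly in SageMath \cite{sage}: compute the $1547$-th prime $p_{1547}$ and the primorial $\prod_{i=1}^{1547}p_i$ (equivalently the Chebyshev sum $\sum_{i\le 1547}\log p_i=\vartheta(p_{1547})$) and confirm it exceeds $2^{18564}$. The value $1547$ is in fact the least $s$ with $h(s)\ge 0$, which is why it appears in the statement. Putting the three steps together proves the lemma.

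The only delicate point I anticipate is this last step: the inequality at $s=1547$ is genuinely tight — the two sides agree to within a few dozen in the exponent of $2$ — so the threshold cannot be lowered, and the check should be carried out with exact integer arithmetic on the primorial rather than with floating-point logarithms. If one prefers to avoid a machine computation, the same conclusion follows from an explicit Rosser–Schoenfeld–type lower bound for $\vartheta(x)$ applied at $x=p_{1547}$; everything else in the argument is routine.
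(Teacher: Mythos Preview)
Your argument is correct and is essentially the paper's proof repackaged: the paper splits $\kappa=\kappa_1\kappa_2$ with $\kappa_1$ carrying the $1547$ smallest prime factors, checks numerically that $\kappa_1^{1/12}>K^{1/12}>5.42\times10^{465}>2^{1547}=W(\kappa_1)$ (where $K\approx 6.57\times10^{5588}$ is the primorial of the first $1547$ primes), and then observes that every remaining prime factor exceeds $12983>2^{12}$ so that $\kappa_2^{1/12}>W(\kappa_2)$. Your monotonicity step $h(s+1)-h(s)=\log_2 p_{s+1}-12>0$ for $s\ge 564$ is exactly this last observation, and your boundary check $h(1547)\ge 0$ is the same numerical inequality $K\ge 2^{18564}$; the two presentations differ only cosmetically.
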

\begin{proof}
Let $V=\{2,3,5,\dots ,12983 \}   $ is the set of first 1547 primes. We see that the product of all elements of $V$ exceeds $ K = 6.57\times 10^{5588}$. Let $
\kappa=\kappa_1 \kappa_2$, where $\kappa_1$ and $\kappa_2$ are co-prime integers such that all prime divisors of $\kappa_1$ come from the least 1547 prime divisors of $\kappa$ and remaining prime divisors
are divisors of $\kappa_2$. Hence, $\kappa_{1}^{1/12} > K^{1/12} > 5.42 \times 10^{465}$, whereas $W(\kappa_1) < 4.93 \times 10^{465}$. The conclusion follows, since $\rho^{1/12} > 2$ for all primes $\rho> 12983$.
\end{proof}
We shall need Theorem 3.4 and Lemma 3.5 for calculation work in the next section. 
 \section{Proof of Theorem 1.1}\label{2}

Proof will be carried out for the situation $t\geq 7$, since according to \cite{chou cohen}  there is no primitive element $\epsilon$, for $t\leq 4$, such that $Tr_{\mathbb{F}_{p^t}/\mathbb{F}_p}(\epsilon) = 0$ and $Tr_{\mathbb{F}_{p^t}/\mathbb{F}_p}(\epsilon^{-1}) = 0$. The cases $t = 5$ and $6$ necessitate substantial computation and appear to demand a different technique. As a result, we postpone further examination of these situations.
\par
We assume initially that, $\omega(p^t -1) \geq 1547$. Using Theorem 3.1 and Lemma 3.5, if $p^{\frac{t}{2} -2} > 5 p^{\frac{t}{6}}$, that is, if $p^t > 5^{\frac{3t}{t-6}}$ then $(p,t) \in A_2$. But $t\geq 7$ gives $\frac{3t}{t-6} \leq 21$. Hence, if $p^t > 5^{21} $ then $(p,t) \in A_2$, and this holds true for $\omega(p^t -1) \geq 1547$. Therefore, we may suppose $\omega(p^t -1) \leq 1546$. We shall use sieve variation in  order to carry forward computational work. Let $62 \leq \omega(p^t -1) \leq 1546$. To use Theorem 3.4 assume $k$ to be the product of least $62$ primes that divide $p^t -1$, that is,  $W(k) = 2^{62}$, then $m \leq 1485$ and $\delta$ assumes its least positive value when $\{ q_1,q_2,\dots,q_{1485}    \} = \{307,311,313,\dots,12979   \}$. This yields $\delta > 0.004174$ and $\Delta < 710770.7395$.  Hence $5\Delta W(k)^2 < 7.558211 \times 10^{43}$. Let $Z = 7.558211 \times 10^{43}$. By sieve variation, $(p,t) \in A_2$ if $q^{\frac{t}{2} -2} >Z$ i.e., if $p^t > Z^{\frac{2t}{t-4}}$. Since $t\geq 7$, it gives $\frac{2t}{t-4} \leq \frac{14}{3}$. Therefore, $(p,t) \in A_2$ under the condition that 
%$p^t > Z^{\frac{14}{3}}$ or
$p^t > 5.834 \times 10^{204}$. Hence, $\omega(p^t -1) \geq 95$ implies $(p,t) \in A_2$.  In a similar manner $(p,t) \in A_3 $,  $A_4$ if $\omega(p^t -1) \geq 95$,  and $(p,t) \in A_5 $ if $\omega(p^t -1) \geq 96$.
\newpage
\begin{center}
    Table 1.
\end{center}

\begin{center}
%\begin{table}
\renewcommand{\arraystretch}{1.5}
\begin{tabular}{|c| c| c| c| c| c |} 
 \hline
 $Sr.No.$& $a \leq \omega(p^t -1) \leq b$&$W(k)$& $\delta>$& $\Delta<$&$5\Delta W(k)^2<$ \\
\hline
1&$ a=13,\;b=94$ & $2^{13}$ &0.04481712&   3594.3767988&  1,206,072,718,756\\
2& $ a=7,\;b=34$  & $2^{7}$ & 0.04609692  &1151.7513186   &94,351,469 \\
3& $ a=6,\;b=25$&$2^{6}$  & 0.08241088  &450.9698124 &9,235,862   \\
4&  $ a=6,\;b=23$&  $2^{6}$  & 0.12550135  &264.9453729 & 5,426,082  \\
5& $ a=6,\;b=22$&  $2^{6}$ & 0.14959773 & 209.2223842 & 4,284,875  \\
\hline
6& $ a=5,\;b=19$  & $2^{5}$  &0.07663431  & 354.3225878 &  1,814,132  \\
\hline
7&  $ a=5,\;b=17$ & $2^{5}$ & 0.13927194  &167.1445296   & 855,780 \\
\hline
8&  $ a=5,\;b=16$ &$2^{5}$ &0.17317025  & 123.2679422  &631,132  \\
9&  $ a=5,\;b=15$ &$2^{5}$ & 0.21090610 &   92.0874844& 471,488 \\
 \hline
\end{tabular}
%\end{table}
\end{center}

\vspace{4mm}

Using the values in the Table 1 above and repeating the process of sieve variation, we determine that $(p,t) \in A_2 $ if $p^t > (4284875)^{\frac{14}{3}}$ or $p^t > 8.8929 \times 10^{30} $ for $t\geq 7$ and since $t\geq 8$ implies $\frac{2t}{t-4}\leq 4$, so $(p,t) \in A_2 $ if %$q^m > (4284875)^{4}$or
$p^t > 3.371 \times 10^{26} $ for $t\geq 8$. 
\par
Therefore, for $t\geq 8$ it is sufficient that $\omega(p^t -1) \geq 20$, We deduce, utilising sieve variation repeatedly for values in the second section of the preceding table that, $(p,t) \in A_2$ if
%if $p^t > (1814132)^4$ or 
$p^t > 1.084 \times 10^{25}$. 
\\
Similarly,  $\omega(p^t -1) \geq 18$ is sufficient for  inclusion of $(p,t)$ in $A_2$, and based on the table above $(p,t) \in A_2$ if $ p^t > 2.2725  \times 10^{21}$ for $t \geq 9$, and $(p,t) \in A_2$ if  $p^t > 8.158 \times 10^{18}$ for $t \geq 10$.

% For $m \geq 9$ $(q,m) \in \mathbf{Q}_2$ if $q^m > (1814132)^{\frac{18}{5}}$ or $ q^m > 3.397 \times 10^{22}$, So for $m\geq 9$ it is sufficient that $\omega(q^m -1) \geq 18$ and the repeated use sieving theorem in table 1 provides $(q,m) \in \mathbf{Q}_2$ if $q^m > (855780)^{\frac{10}{3}}$ or $q^m > 2.2725  \times 10^{21}$. Similarly  $\omega(q^m -1) \geq 16$ is sufficient for  inclusion of $(q,m)$ in $\mathbf{Q}_2$, and by seeing at the table above $(q,m) \in \mathbf{Q}_2$ if $q^m > 8.158 \times 10^{18}$ for $m \geq 10$.
Hence $(p,t) \in A_2$ unless $t=7$ and $p<26382$, $ t=8$ and $p<1347$, $t=9$ and $p<237$,  $t=10$ and $p<78$, $t=11$ and $p<53$, $t=12$ and $p<38$, $t=13$ and $p<29$, $t=14$ and $p<23$, $t=15$ and $p<19$, $t=16$ and $p<16$, $t=17$ and $p<13$, $t=18$ and $p<12$, $t=19$ and $p<10$,  $t=20$ and $p<9$, $t=21,22$ and $p<8$, $t=23,24$ and $p<7$, $t=25,26,27$ and $p=2,3,4,5$. $28 \leq t \leq 31$ and $p=2,3,4$. $32 \leq t \leq 39$ and for $p=2,3$. $40 \leq t \leq 62$ and $p=2$. 
%For each $(q,m)$ from the above discussion we verified main theorem and got a list of $570$ possible exceptions (see Appendix 1). And then for these possible exceptions we find that sieving theorem is satisfied for most of the prime powers except for those mentioned in Theorem 1.1 (for $A_2$, see Appendix 2). 
%See the lists at second author's homepage (http://web.iitd.ac.in/	rksharma/index.html).From here Theorem 1.1 follows. Using Similar arguments we can find a subset of $Q_n$ for each $n\in \mathbb{N}$. We give the following result for $A_3$.

From the preceding discussion for every $(p, t)$, we validated Theorem 3.1 and compiled a list of 570 potential exceptions (listed in the Appendix). Then, for these potential exceptions, we discover that sieve variation is true for the large majority of prime powers, with the exception of those mentioned in Theorem 1.1. (see Appendix). Theorem 1.1 derives from this. Using similar reasoning, it is possible to find a subset of $A_n$ for any $n \in \mathbb{N}$. In particular, for $A_3$, we have the following result.

\begin{theorem}
Suppose t q, r, p $\in \mathbb{N}$ be such that q is a prime number, t $\geq 7$ and $p=q^r$. Let p and t assumes none of the following  values:
\begin{enumerate}
    \item $2 \leq p \leq 31$ or $p=37,41,43,49,61,67,71,79,103,121$
    and $t=7$; 
    
    \item $2 \leq p \leq 47$ or $p=53, 83$ and $t=8;$
    
    \item $2\leq p \leq 7$ or $p=9,11,16$ and $t=9$;
    
    \item $2\leq p \leq 8$ and $t=10$;
    
    \item $p=2,3,4$ and $t=11$; 
    
    \item $2\leq p \leq 7$ and $t=12$;
    
    \item $p=2$ and $t=14,15,16,18,20,24$.
\end{enumerate}
Then $(p,t) \in A_3$.
\end{theorem}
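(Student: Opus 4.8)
The plan is to follow, essentially verbatim, the architecture of the proof of Theorem 1.1, now with $n=3$, so that the governing inequality of Theorem 3.1 becomes $p^{t/2-2}>7\,W(p^t-1)^2$ and the sieving inequality of Theorem 3.4 becomes $p^{t/2-2}>7\,\Delta\,W(k)^2$. First I would dispose of the fields with many prime divisors: assuming $\omega(p^t-1)\ge 1547$, Lemma 3.5 gives $W(p^t-1)^2\le (p^t-1)^{1/6}<(p^t)^{1/6}$, so Theorem 3.1 holds as soon as $p^t>7^{3t/(t-6)}$; since $t\ge 7$ forces $\tfrac{3t}{t-6}\le 21$, the bound $p^t>7^{21}$ suffices, and this certainly holds whenever $\omega(p^t-1)\ge 1547$ because the product of the first $1547$ primes dwarfs $7^{21}$. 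Hence one may assume $\omega(p^t-1)\le 1546$.

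Next I would run the sieve of Theorem 3.4 in cascading stages. Beginning with $62\le \omega(p^t-1)\le 1546$, take $k$ to be the product of the $62$ smallest primes dividing $p^t-1$, bound $\delta$ and $\Delta$ in the worst case, and conclude $(p,t)\in A_3$ once $p^t$ exceeds an explicit (large) constant raised to the power $\tfrac{2t}{t-4}\le \tfrac{14}{3}$. This lowers the threshold on $\omega(p^t-1)$; as already recorded inside the proof of Theorem 1.1, $\omega(p^t-1)\ge 95$ already suffices for $A_3$. One then iterates, using progressively smaller $k$ — the analogue of Table 1 with the factor $5$ replaced by $7$ — to drive the threshold on $\omega(p^t-1)$ down step by step, at each stage also exploiting that larger $t$ shrinks the exponent $\tfrac{2t}{t-4}$. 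The outcome is a finite, explicit list of pairs $(p,t)$ of the shape ``$t=7$ and $p$ below some bound'', ``$t=8$ and $p$ below a smaller bound'', and so on down to a handful of small $p$ for each $t$ up to some maximal value, outside of which Theorem 3.1 or its sieved form already gives $(p,t)\in A_3$.

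For the surviving candidates I would proceed case by case: for each such $(p,t)$, first attempt Theorem 3.4 with a judiciously chosen $k$ (typically the product of the few smallest primes dividing $p^t-1$, chosen to balance $W(k)^2$ against $\Delta$), which eliminates the large majority; and for the residual genuinely small $(p,t)$, verify the conclusion directly in SageMath, either by exhibiting for every $f\in R_{p,t}(n_1,n_2)$ with $n_1+n_2=3$ and every $a,b\in\mathbb{F}_p$ an explicit primitive $\epsilon$ with $Tr(\epsilon)=a$ and $Tr(f(\epsilon))=b$, or more economically by evaluating the character-sum count $A_{f,a,b}(p^t-1,p^t-1)$ and checking positivity. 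Whatever survives both the sieve and the direct check is precisely the list in the statement, which is then reported as a set of \emph{possible} exceptions.

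The main obstacle is computational rather than conceptual: the bookkeeping in the cascading sieve (fixing the stage boundaries and the associated $k$, and certifying the worst-case values of $\delta$ and $\Delta$), followed by the finite but sizeable search over candidate $(p,t)$ — for the borderline $t=7$ the parameter $p$ ranges into the tens of thousands — where each pair must be cleared either by a good sieving modulus or by an exhaustive verification over all relevant $f$ and all $(a,b)$. Since replacing $n=2$ by $n=3$ merely changes the constant $2n+1$ from $5$ to $7$, all thresholds are slightly worse than in Theorem 1.1, which accounts for the somewhat longer exceptional list.
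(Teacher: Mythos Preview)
Your proposal is correct and mirrors the paper's approach exactly --- the paper gives no separate proof of this theorem beyond the remark that it follows by ``similar reasoning'' to Theorem~1.1, i.e., re-running that argument with the constant $2n+1=7$ in place of $5$.

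One point to correct: the paper performs \emph{no} direct verification of the residual cases. The exceptional list in the statement is precisely the set of pairs $(p,t)$ for which neither Theorem~3.1 nor Theorem~3.4 (with any tested choice of $k$) succeeds; these are then left as \emph{possible} exceptions, and no exhaustive search over all $f\in R_{p,t}(3)$ and all $(a,b)\in\mathbb{F}_p^2$ is attempted. Such a search would in any case be infeasible for most of the listed pairs, since the number of admissible rational functions over $\mathbb{F}_{p^t}$ grows like $p^{3t}$. Your final step should therefore simply be to report as possible exceptions whatever the sieve fails to clear --- which also resolves the internal tension in your plan between performing a definitive direct check and still calling the survivors ``possible'' exceptions.
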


\bibliographystyle{amsplain}
%\begin{thebibliography}{10}
	%\bibliographystyle{abbrv}
%\bibliography{rp1.bib}

\newpage

\section*{ Appendix}
\subsection*{List of 570 values of $(p,t)$ for which the condition of  Theorem 3.1 of the this article  fails:}
\subsubsection*{\textbf{For t=7:}}
%\textbf{For $t=7$:-}
$p$= 2, 4, 8, 16, 32, 64, 256, 512, 1024, 4096, 3, 9, 27, 81, 243, 729, 6561, 5, 25, 125, 625, 3125, 15625, 7, 49, 343, 2401, 11, 121, 1331, 14641, 13, 169, 2197, 17, 19, 361, 23, 529, 29, 31, 37, 41, 1681, 43, 1849, 47, 53, 59, 3481, 61, 67, 4489, 71, 79, 6241, 83, 6889, 97, 9409, 101, 103, 10609, 107, 109, 11881, 113, 127, 131, 17161, 139, 19321, 151, 22801, 157, 181, 191, 197, 199, 211, 223, 227, 229, 233, 239, 241, 263, 269, 271, 277, 281, 283, 293, 311, 331, 359, 367, 389, 397, 401, 409, 431, 439, 463, 491, 499, 509, 547, 571, 593, 601, 607, 613, 619, 631, 643, 659, 661, 683, 691, 709, 727, 733, 739, 743, 877, 919, 953, 967, 1021, 1051, 1063, 1093, 1123, 1151, 1171, 1181, 1231, 1283, 1301, 1303, 1321, 1381, 1399, 1453, 1481, 1483, 1499, 1523, 1531, 1597, 1607, 1693, 1723, 1741, 1759, 1789, 1801, 1823, 1877, 1879, 1951, 2003, 2141, 2161, 2281, 2311, 2381, 2591, 2713, 2731, 2791, 2887, 2971, 3041, 3083, 3191, 3221, 3229, 3271, 3301, 3307, 3313, 3499, 3547, 3571, 3739, 3851, 3911, 4013, 4159, 4219, 4241, 4243, 4261, 4327, 4421, 4423, 4567, 4621, 4663, 4691, 4751, 4957, 5419, 5923, 5981, 6067, 6211, 6491, 6577, 7159, 7759, 8009, 8053, 8191, 8807, 9103, 9403, 9421, 9463, 9719, 9767, 9871, 9901, 9967, 10427, 10949, 10957, 10979, 11311, 11593, 11621, 12959, 14323, 15313, 15511, 16381, 17431, 17491, 19483, 19687, 19891, 20011, 20441, 21391, 22543, 23143, 23671, 24181, 24683, 25171, 25411.

\subsubsection*{\textbf{For t=8:}}
%\textbf{For $t=8$:-} 
$p$ = 2, 4, 8, 16, 32, 64, 128, 256, 512, 1024, 3, 9, 27, 81, 243, 729, 5, 25, 125, 7, 49, 343, 11, 121, 1331, 13, 169, 17, 19, 361, 23, 529, 29, 841, 31, 961, 37, 41, 43, 47, 53, 59, 61, 67, 71, 73, 79, 83, 89, 97, 101, 103, 107, 109, 113, 127, 131, 137, 139, 149, 151, 157, 163, 167, 173, 179, 181, 191, 193, 197, 211, 223, 227, 229, 233, 239, 241, 251, 257, 263, 269, 271, 277, 281, 283, 293, 307, 311, 313, 317, 331, 337, 347, 349, 353, 359, 367, 373, 379, 383, 389, 397, 401, 409, 419, 421, 433, 439, 443, 457, 461, 463, 467, 491, 499, 509, 521, 523, 541, 547, 557, 563, 569, 571, 587, 593, 599, 601, 617, 619, 631, 647, 653, 659, 661, 683, 691, 701, 709, 727, 733, 739, 743, 757, 773, 787, 797, 809, 811, 823, 827, 829, 839, 853, 857, 859, 863, 881, 887, 911, 919, 929, 937, 941, 947, 953, 967, 971, 977, 983, 991, 1009, 1013, 1021, 1033, 1039, 1061, 1063, 1069, 1087, 1091, 1093, 1097, 1103, 1109, 1117, 1123, 1201, 1213, 1217, 1223, 1231, 1277, 1279, 1283, 1289, 1291, 1301, 1303, 1319, 1321.

\subsubsection*{\textbf{For t=9:}}
%\textbf{ For $t=9$:-} 
$p$ = 2, 4, 8, 16, 32, 3, 9, 27, 81, 5, 25, 125, 7, 49, 11, 121, 13, 169, 17, 19, 23, 29, 31, 37, 43, 47, 53, 61, 79, 83, 137, 139, 149, 157, 211.

\subsubsection*{\textbf{For t=10:}}
%\textbf{ For m= 10 :-} 
$p$ = 2, 4, 8, 16, 32, 64, 3, 9, 27, 5, 25, 7, 49, 11, 13, 17, 19, 23, 29, 31, 37, 41, 53, 59, 61, 67.

\subsubsection*{\textbf{For t=11:}}
%\textbf{ For $t= 11$ :-} 
$p$ = 2, 4, 16, 3, 9, 7, 13.

\subsubsection*{\textbf{For t=12:}}
%\textbf{ For $t= 12$ :-}
$p$ = 2, 4, 8, 16, 32, 3, 9, 27, 5, 7, 11, 13, 17, 19, 23, 29, 31, 37.

\subsubsection*{\textbf{For t=14:}}
%\textbf{ For $t= 14$ :-}
$p$ = 2, 4, 3, 5, 7.

\subsubsection*{\textbf{For t=15:}}
%\textbf{ For $t= 14$ :-}
$p$ = 2, 4, 16, 3, 9, 5.

\subsubsection*{\textbf{For t=16:}}
%\textbf{ For $t= 16$ :-} 
$p$ = 2, 4, 8, 3, 5.

\subsubsection*{\textbf{For t=18:}}
%\textbf{ For $t= 18$ :-}
$p$ = 2, 3, 4, 5.

\subsubsection*{\textbf{For t=20:}}
%\textbf{ For $t= 20$ :-}
$p$ = 2, 4, 8.

\subsubsection*{\textbf{For t=24:}}
%\textbf{ For $t= 24$ :-}
$p$ = 2, 3, 4. 

\subsubsection*{\textbf{For t=22, 28, 30, 36:}}
%\textbf{ For $t= 22, 28, 30, 36$ :-}
$p$ = 2.

\newpage
\subsection*{List consisting values of $k$, $m$ corresponding to $(p, t)$ for
which the condition of theorem 3.1 fails but sieve variation is satisfied for
some choice of $k$ in this article.}
\subsubsection*{for t=7:}
\begin{center}
 \renewcommand{\arraystretch}{1}
\begin{tabular}{|c| c c c|} 
 %\hline
% \multicolumn{4}{|c|}{$t=7$} \\
\hline
 $Sr.No.$ & $p$ & $k$ & $m$ \\
\hline 
1 & 32 & 1 & 4\\
2 & 64 &3 &  5\\
3 & 256 &3 & 7\\
4 &  512  &1&  6\\
5 &1024   &3&  8\\
6 & 4096 & 3 &   11\\
7 & 81 & 2  &5\\
8 & 243 &2  &4\\
9 & 729 & 2 &    7\\
10 & 6561 &2 &   8  \\ 
11 & 125 & 2  &  4\\
12 & 625 & 6   & 5\\
13 &  3125& 2  & 7\\
14 & 15625 &6   &10\\
15 & 343  &6    &4\\
16 &2401 & 6&    6\\
17 &121    & 6&   4 \\      
18& 1331&    2 &  8   \\  
19& 14641&   6  & 8     \\ 
20 &169   &  6   &4       \\
21 &  2197  & 6 &  6        \\
22 & 17      &2  & 1      \\
23 &   361   &6&   5       \\
24 & 529&     6 &  6        \\
25&  29 &    2  & 2     \\
26&  41  &   2  & 3      \\
27&   1681&  6&   6       \\ 
28 & 1849    &6  & 5        \\
29 & 47   &   2&   3      \\
30&  53      &2 & 3 \\
31&  59     &2&   5   \\
32&  3481   &6  & 8    \\
33& 4489    &6&   8    \\
\hline
\end{tabular}
\quad
\renewcommand{\arraystretch}{1}
\begin{tabular}{|c| c c c|} 
%\hline
 %\multicolumn{4}{|c|}{$t=7$} \\
 \hline
 $Sr.No.$ & $p$ & $k$ & $m$ \\
\hline 

34& 71      &2&   4  \\
35&  6241   &6&   8 \\
36& 83      &2&   3\\
37&  6889   &6&   7\\
38& 97      &6&   3  \\
39&  9409   &6&   7  \\  
40& 101     &2&   3    \\ 
41&  103    &6&   4   \\
42& 10609   &6&   7     \\
43& 107      &2&  4  \\
44&  109    &6&   3    \\
45& 11881   &6&   7     \\
46&113     &2&    3\\
47& 127    &6&    4  \\   
48&  131    &2&   5    \\ 
49& 17161  &6&    9     \\
50& 139     &6&   4     \\
51&  19321  &6&   9     \\
52& 151     &6&   3\\
53&  22801  &6&   9  \\    
54& 157     &6&   3    \\
55& 181     &6&   4    \\
56&  191   & 2&   6    \\
57& 197     &2&   4    \\
58&  199    &6&   5    \\
59& 211     &6&   4    \\
60&  223    &6&   5    \\
61&227      &2&   4     \\
62&   229  &6&    3\\
63& 233    &2&    4\\
64&  239   &2&    5\\
65&241     &6&    3\\
66& 263    &2&    4\\
\hline
\end{tabular}
\end{center}

\begin{center}
\renewcommand{\arraystretch}{1}
\begin{tabular}{|c| c c c |} 
\hline
 $Sr.No.$ & $q$ & $l$ & $s$ \\
\hline 

67& 269     &2&   6\\
68&  271  &6& 3\\
69& 277    &6&    4\\
70& 281    &2&    5\\
71&  283   &6&    3\\
72& 293   & 2&    4\\
73& 311    &2&    5\\
74&  331   &6&    4\\
75& 359   &2& 6\\
76& 367   &6& 4\\
77& 389   &2& 6\\
78&  397  &6& 5\\
79& 401   &2& 5\\
80& 409   &6& 4\\
81&  431  &2& 7\\
82& 439   &6& 4\\
83&  463  &6& 4\\
84& 491     &2&   5\\
85 & 499    & 6&    4\\
86& 509    &2&    5\\
87& 547    &6&  4\\
88& 571    &6&  4\\
89& 593    &2&  5\\
90& 601    &6&   4\\
91& 607    &6&    4\\
92& 613    &6&    5 \\
93& 619   & 6&  4\\
94& 631    &6&    4\\
95& 643    &6&   4\\
96&  659   &2&    5\\
97& 661    &6&    5\\
98& 683    &2&    5\\
99& 691     & 6 &  6\\
100& 709    &6&  4\\
101 & 727    & 6 & 5\\
102 & 733   & 6&    4\\
103 & 739    &6&    4\\
104 & 743    &2&    5\\
105 & 877    &6&    5\\
106 & 919    &6&    6\\
\hline
\end{tabular}
\quad
\renewcommand{\arraystretch}{1}
\begin{tabular}{|c| c c c |} 
\hline
 $Sr.No.$ & $q$ & $l$ & $s$ \\
\hline

107 & 953     &2&   8\\
108 &  967   &6&    5\\
109 & 1021    &6&   5\\
110 & 1051     & 6& 5\\
111 & 1063    &6&   5\\
112 & 1093    &6&   5\\
113 & 1123    &6&  6\\
114 & 1151    &2&   6\\
115 & 1171    &6&   5\\
116 & 1181    &2&   6\\
117 & 1231    &6&  6\\
118 & 1283     &2&  6\\
119 & 1301    &2&   6\\
120 & 1303     &6&  5\\
121 & 1321    &6&   5\\
122 & 1381     &6&  5\\
123 & 1399     &6&  5\\
124 & 1453     &6&  5\\
125 & 1481    &2&   6\\
126 & 1483     &6&  6\\
127 & 1499     &2&  6\\
128 & 1523     &2&  6\\
129 & 1531     &6&  6\\
130 & 1597     &6&  5\\
131 & 1607    &2&   6\\
132 & 1693     &6&  6\\
133 &  1723   &6&   5  \\  
134 & 1741    &6&   6    \\
135 & 1759    &6&   5    \\
136 & 1789    &6&   5     \\
137 & 1801    &6&   6     \\
138 & 1823    &2&   6     \\
139 & 1877    &2&   6\\
140&  1879    &6&   5\\
141 & 1951    &6 &  7  \\ 
142 & 2003    &2&   7    \\ 
143 & 2141    &2&   7     \\
144 & 2161    &6&   7     \\
145 & 2281    &6&   6     \\
146 & 2311    &6&   6     \\
\hline
\end{tabular}

\end{center}

\begin{center}
\renewcommand{\arraystretch}{1}
\begin{tabular}{|c| c c c |} 
\hline
 $Sr.No.$ & $p$ & $k$ & $m$ \\
\hline 

147 & 2381    &2&   8     \\
148 & 2591    &2&   7     \\
149 & 2713    &6&   6     \\
150 & 2731    &6&   7     \\
151 & 2791    &6&   7     \\
152 & 2887    &6&   6     \\
153 & 2971    &6&   6     \\
154 & 3041    &2&   7     \\
155 & 3083    &2&   7     \\
156 & 3191    &2&   7     \\
157 & 3221    &2&   7     \\
158 & 3229    &6&   6    \\
159 & 3271    &6&   6     \\
160 & 3301    &6&   7     \\
161 & 3307    &6&   6     \\
162 & 3313    &6&   6     \\
163 & 3499    &6&   8    \\
164 & 3547    &6&   8    \\
165 & 3571    &6&   6     \\
166 & 3739    &6&   6     \\
167 & 3851    &2&   7     \\
168 & 3911    &2&   7     \\
169 & 4013    &27 &  7     \\
170 & 4159    &6&   6     \\
171 & 4219    &6&   7     \\
172 & 4241    &2&   7     \\
173 & 4243    &6&   7     \\
174 & 4261    &6&   6 \\
175 & 4327    &6&   7   \\  
176 & 4421    &2&   7     \\
177 & 4423    &6&   6     \\
178 & 4567    &6&   6     \\
179 & 4621    &6&   6     \\
180 & 4663    &6&   6     \\
181 & 4691    &2&   7     \\
182 & 4751    &2&   7     \\
183 & 4957    &6&   7     \\
184 & 5419    &6&   7     \\
185 & 5923    &6&   7     \\
186 & 5981    &2&   8    \\
187 & 6067    &6&   7     \\
188 & 6211    &6&   7     \\
189 & 6491    &2&   8 \\
\hline
\end{tabular}
\quad
\renewcommand{\arraystretch}{1}
\begin{tabular}{|c| c  c c |} 
\hline
 $Sr.No.$ & $p$ & $k$ & $m$ \\
\hline 

190 & 6577    &6&   7   \\  
191 & 7159    &6&   7    \\
192 & 7759    &6&   8    \\
193 & 8009    &2&   9     \\
194 & 8053    &6&   9     \\
195 & 8191    &6&   7     \\
196 & 8807    &2&   8     \\
197 & 9103    &6&   7     \\
198 & 9403    &6&   7     \\
199 & 9421    &6&   7     \\
200 &  9463   &6&   7    \\
201 & 9719    &2&   8     \\
202 & 9767    &2&   8     \\
203 & 9871    &6&   7     \\
204 & 9901    &6&   7     \\
205 & 9967    &6&   7     \\
206 & 10427   &2&   8     \\
207 & 10949   &2&   10     \\
208 & 10957   &6&   8     \\
209 & 10979   &2&   8     \\
210 & 11311   &6&   7     \\
211 & 11593   &6&   7     \\
212 & 11621   &2&   8     \\
213 & 12959   &2&   9     \\
214 & 14232   &6&   8 \\
215 & 15313   &6&   8   \\  
216 & 15511   &6&   8     \\
217 & 16381   &6&   8 \\
218 & 17431   &6&   9   \\  
219 & 17491   &6&   9    \\
220 & 19483   &6&   8    \\
221 & 19687   &6&   8     \\
222 & 19891   &6&   8     \\
223 & 20011   &6&   8     \\
224 & 20441   &2&   10     \\
225 & 21391   &6&   8     \\
226 & 22543   &6&   8     \\
227 & 23143   &6&   8     \\
228 & 23671   &6&   9     \\
229 & 24181   &6&   8     \\
230 & 24683   &2&   9     \\
231 & 25171   &6&   9 \\
232 & 25411   &6&   8   \\
\hline
\end{tabular}
\end{center}

\newpage
\subsubsection*{for t=8:}
\begin{center}
 \renewcommand{\arraystretch}{1}
\begin{tabular}{|c| c c c|} 
 \hline
% \multicolumn{4}{|c|}{$t=8$} \\
%\hline
 $Sr.No.$ & $p$ & $k$ & $m$ \\
\hline 
1 & 64 &6 &  7\\
2 &128&3&6\\
3 & 256 &3 & 6\\
4 &  512  &6&  10\\
5 &1024   &3&  8\\
6 &81   & 2&   5 \\
7 &243 &2 &6\\
8 &125& 6& 6\\ 
9 &49& 6& 4\\
10 &343& 6& 9\\
11 &121& 6& 5\\
12 &169& 6& 5\\
13 &361& 6& 6\\
14 &529 &6 & 7 \\
15 &841& 6& 7 \\
16 &961 &6 &7 \\
17 &53& 6& 5 \\
18 &59 &6 &6\\
19 &61& 6& 4 \\
20 &67 &6 &6\\
21 &71& 6& 4 \\
22 &73& 6& 5\\
23 &79 &6 &6\\
24 &89& 6& 6\\
25 &97 &6 &5\\
26 &101& 6 &5\\
27 &103 &6 &5 \\
28 &107& 6& 5 \\
29 &109 &6 &6 \\
30 &113& 6& 5 \\
31 &127 &6 &6 \\
32 &131& 6& 6 \\
33 &137 &6 &7 \\
34 &139& 6& 6 \\
35& 149 &6 &6 \\
36 &151& 6& 6 \\
37 &157 &6 &7 \\
38 &163& 6& 5 \\

\hline
\end{tabular}
\quad
\renewcommand{\arraystretch}{1}
\begin{tabular}{|c| c c c|} 
%\hline
 %\multicolumn{4}{|c|}{$t=8$} \\
 \hline
 $Sr.No.$ & $p$ & $k$ & $m$ \\
\hline 
39 &167 &6 &6 \\
40 &173& 6& 7 \\
41 &179& 6& 6 \\
42 &181& 6& 6 \\
43 &191 &6 &7\\
44 &197& 6& 6 \\
45 &211 &6 &7 \\
46 &223& 6& 7 \\
47 &227 &6 &7 \\
48 &229& 6& 8 \\
49 &233 &6 &8 \\
50 &239& 6& 7 \\
51 &241 &6 &7 \\
52 &251& 6& 5 \\
53 &257& 6& 5 \\
54 &263 &6 &7 \\
55 &269& 6& 6 \\
56 &271 &6 &6 \\
57 &277& 6& 6 \\
58 &283 &6 &6 \\
59 &293& 6& 7 \\
60 &311 &6 &7 \\
61 &313& 6& 6 \\
62 &317 &6 &6\\
63 &331& 6& 8 \\
64 &337 &6 &6 \\
65 &347& 6& 6 \\
66 &349 &6 &6 \\
67 &353& 6& 6 \\
68 &359 &6 &7 \\
69 &367& 6& 6 \\
70 &373 &6& 7 \\
71 &379& 6& 6 \\
72 &383& 6& 7\\
73 &389& 6& 8 \\
74 &397 &6 &6 \\
75 &401& 6& 8 \\
76 &409 &6 &6 \\

\hline
\end{tabular}
\end{center}

\begin{center}
\renewcommand{\arraystretch}{1}
\begin{tabular}{|c| c c c|} 
 \hline
 $Sr.No.$ & $p$ & $k$ & $m$ \\
\hline 
77 &433& 6& 7\\
78 &439 &6& 7\\
79 &443& 6& 7\\
80 &457 &6 &7 \\
81 &467& 6& 7\\
82 &491 &6 &7 \\
83 &499& 6& 8\\
84 &509 &6 &7\\
85 &521& 6& 7 \\
86 &523 &6 &6\\
87 &541& 6& 6 \\
88 &547 &6 &8\\
89 &557& 6& 7\\
90 &563 &6 &9 \\
91 &569& 6& 6\\
92 &571 &6 &9\\
93 &587& 6& 7\\
94 &593 &6 &8 \\
95 &599& 6& 8 \\
96 &601 &6 &7 \\
97 &617& 6& 8 \\
98 &619 &6 &7\\
99 &631& 6& 7 \\
100 &647 &6 &7 \\
101 &653& 6& 7\\
102 &661 &6 &7 \\
103 &683& 6& 7\\
104 &691 &6 &7\\
105 &701& 6& 7 \\
106 &709 &6 &7 \\
107 &733& 6& 7 \\
108 &739 &6 &7 \\
109 &743& 6& 9 \\
110 &757& 6& 8 \\
111 &773 &6 &8\\
112 &787& 6& 8 \\
113 &797 &6 &8 \\
114 &809& 6& 8 \\

\hline
\end{tabular}
\quad
\renewcommand{\arraystretch}{1}
\begin{tabular}{|c| c c c|} 
 \hline
 $Sr.No.$ & $p$ & $k$ & $m$ \\
\hline 
115 &811 &6 &7 \\
116 &823& 6& 8\\
117 &827 &6 &8\\
118 &829& 6& 7\\
119 &839 &6 &7\\
120 &857& 6& 7 \\
121 &859 &6 &8 \\
122 &863& 6& 8\\
123 &881 &6 &7\\
124 &887& 6& 7 \\
125 &919 &76 &7 \\
126 &929 &6 &7 \\
127 &941& 6& 7 \\
128 &947 &6 &8 \\
129 &953& 6& 7 \\
130 &971 &6 &8\\
131 &977& 6& 8\\
132 &983 &6 &7\\
133 &991& 6& 7\\
134 &1009 &6 &7\\
135 &1013& 6& 7 \\
136 &1021 &6 &8\\
137 &1033& 6& 8\\
138 &1039 &6 &8\\
139 &1061&  6& 9 \\
140 &1063 &6 &8\\
141 &1069& 6& 7\\
142 &1087 &6 &7\\
143 &1091& 6& 8\\
144 &1093& 6 &7 \\
145 &1097& 6& 8\\
146 &1103 &6 &8\\
147 &1109 &6 &8 \\
148 &1117 &6 &8 \\
149 &1123 &6 &8\\
150 &1201 &6 &9 \\
151 &1213 &6 &8 \\
152 &1223 &6 &9 \\

\hline
\end{tabular}
\end{center}

\begin{center}
\renewcommand{\arraystretch}{1}
\begin{tabular}{|c| c c c|} 
\hline
 $Sr.No.$ & $p$ & $k$ & $m$ \\
\hline 
153 &1231 &6 &8 \\
154 &1277 &6 &9 \\
155 &1279 &6 &8 \\
156 &1283 &6 &9 \\
157 &1289 &6 &9  \\
158 &1291 &6 &9 \\
159 &1303 &6 &8 \\
160 &1319 &6 &8 \\
161 &1321 &6 &8\\
162 &191 &6 &7 \\
163 &911 &30 &9\\
164 &659 &30 &8 \\
165 &1301 &30 &10\\

\hline
\end{tabular}
\quad
\renewcommand{\arraystretch}{1}
\begin{tabular}{|c| c c c |} 
\hline
 $Sr.No.$ & $p$ & $k$ & $m$ \\
\hline 
166 &281 &30 &6 \\
167 &410 &30 &6 \\
168 &421 &30 &8 \\
169 &937 &30 &7 \\
170 &1331 &30 &9\\
171 &307 &30 &7 \\
172 &1217 &30 &7 \\
173 &967 &30 &8 \\
174 &461 &30 &7 \\
175 &463 &30 &7\\
176 &853 &30 &9 \\
177 &727 &30 &7 \\
178 &729 &30 &9\\

\hline
\end{tabular}
\end{center}

\subsubsection*{for t=9:}
\begin{center}
\renewcommand{\arraystretch}{1}
\begin{tabular}{|c| c c c|}
%\hline
% \multicolumn{4}{|c|}{$m=9$} \\
\hline
 $Sr.No.$ & $p$ & $k$ & $m$ \\
 \hline
1 &8&7&2\\
2 &32&7&5\\
3 &27&2&5\\
4&81&14&7\\
5 &25&6&5\\
6 &125&2&7\\
7&49&6&5\\
8&121&6&6\\
9&13&6&2\\
\hline 
\end{tabular}
\renewcommand{\arraystretch}{1}
\begin{tabular}{|c| c c c |}
\hline
 $Sr.No.$ & $p$ & $k$ & $m$ \\
\hline 
10&169&6&7\\
11&17&2&3\\
12&19&6&3\\
13&23&2&5\\
14&29&2&5\\
15&31&6&4\\
16&37&6&5\\
17&43&6&6\\
18&47&2&5\\
\hline
\end{tabular}
\renewcommand{\arraystretch}{1}
\begin{tabular}{|c| c c c |} 
\hline
 $Sr.No.$ & $p$ & $k$ & $m$ \\
\hline 
19&53&2&7\\
20&61&6&5\\
21&79&6&5\\
22&83&2&6\\
23&137&2&7\\
24&139&6&6\\
25&149&2&7\\
26&157&6&6\\
27&211&6&7\\
\hline
\end{tabular}
\end{center}

\subsubsection*{for t=10: }
\begin{center}
\renewcommand{\arraystretch}{1}
\begin{tabular}{|c| c c c|}
\hline
% \multicolumn{4}{|c|}{$m=10$} \\
%\hline
 $Sr.No.$ & $p$ & $k$ & $m$ \\
 \hline
1 &8&3&5\\
2&16&3&6\\
3&32&3&6\\
4&64&6&9\\
5 &9&2&4\\
6&27&2&7\\
7&25&6&6\\
\hline 
\end{tabular}
\quad
\renewcommand{\arraystretch}{1}
\begin{tabular}{|c| c c c |} 
\hline
 $Sr.No.$ & $p$ & $k$ & $m$ \\
\hline 
8&49&6&6\\
9 &11&6&3\\
10&13&6&4\\
11&17&6&4\\\
12&19&6&5\\
 13&23&6&5\\
14&29&6&6\\
\hline
\end{tabular}
\quad
\renewcommand{\arraystretch}{1}
\begin{tabular}{|c| c c c |} 
\hline
 $Sr.No.$ & $p$ & $k$ & $m$ \\
\hline 
15&31&6&5\\
16&37&6&5\\
17 &41&6&6\\
18&53&6&6\\
19&59&6&7\\
20&61&6&6\\
21 &67&6&6\\
\hline
\end{tabular}
\end{center}

\subsubsection*{for t=11:}

\begin{center}
\renewcommand{\arraystretch}{1}
\begin{tabular}{|c| c c c|}
\hline
% \multicolumn{4}{|c|}{$m=11$} \\
%\hline
 $Sr.No.$ & $p$ & $k$ & $m$ \\
 \hline
1&16&3&6\\
2&9&2&4\\
3&7&2&3\\
4&13&2&5\\
\hline 
\end{tabular}
\end{center}

\subsubsection*{for t=12:}
\begin{center}
\renewcommand{\arraystretch}{1}
\begin{tabular}{|c| c c c|}
\hline
 %\multicolumn{4}{|c|}{$m=12$} \\
%\hline
 $Sr.No.$ & $p$ & $k$ & $m$ \\
 \hline
1 &8&39&6\\
2&16&39&7\\
3&32&39&9\\
4&9&2&6\\
5&13&6&6\\
\hline 
\end{tabular}
\quad
\renewcommand{\arraystretch}{1}
\begin{tabular}{|c| c c c |} 
\hline
 $Sr.No.$ & $p$ & $k$ & $m$ \\
\hline 
6&17&6&6\\
7&19&6&6\\
8&29&6&9\\
9&27&30&6\\
& & &\\
\hline
\end{tabular}
\quad
\renewcommand{\arraystretch}{1}
\begin{tabular}{|c| c c c |} 
\hline
 $Sr.No.$ & $p$ & $k$ & $m$ \\
\hline 
10&11&30&6\\
11&23&30&7\\
12&31&30&6\\
13&37&30&8\\
& & &\\
\hline
\end{tabular}
\end{center}

\subsubsection*{for t=14:}
\begin{center}
\renewcommand{\arraystretch}{1}
\begin{tabular}{|c| c c c|}
\hline
% \multicolumn{4}{|c|}{$m=14$} \\
%\hline
 $Sr.No.$ & $p$ & $k$ & $m$ \\
 \hline
1&4 &15& 4\\
2& 5 &6& 3\\
3& 7& 6& 4\\
4&3& 2& 2\\
\hline 
\end{tabular}
\end{center}

\subsubsection*{for t=15:}
\begin{center}
\renewcommand{\arraystretch}{1}
\begin{tabular}{|c| c c c|}
%\hline
 %\multicolumn{4}{|c|}{$m=15$} \\
\hline
 $Sr.No.$ & $p$ & $k$ & $m$ \\
 \hline
1 &4 &3& 5\\
2 &3& 2& 3\\
3 &9& 2& 7\\
4 &5& 2& 5\\
5 &16& 15& 9\\
\hline 
\end{tabular}
\end{center}

\subsubsection*{for t=16:}
\begin{center}
\renewcommand{\arraystretch}{1}
\begin{tabular}{|c| c c c|}
%\hline
 %\multicolumn{4}{|c|}{$m=16$} \\
\hline
 $Sr.No.$ & $p$ & $k$ & $m$ \\
 \hline
1 &4 &3& 4\\
2 &8& 3& 8\\
3 &3 &2 &4\\
4 &5& 2& 5\\
\hline 
\end{tabular}
\end{center}

\subsubsection*{for t=18:}
\begin{center}
\renewcommand{\arraystretch}{1}
\begin{tabular}{|c| c c c|}
%\hline
 %\multicolumn{4}{|c|}{$m=18$} \\
\hline
 $Sr.No.$ & $p$ & $k$ & $m$ \\
 \hline
1 &3& 2& 5\\
2 &4 &15 &6\\
3 &5 &6 &5\\
\hline 
\end{tabular}
\end{center}

\subsubsection*{for t=20:}
\begin{center}
\renewcommand{\arraystretch}{1}
\begin{tabular}{|c| c c c|}
%\hline
 %\multicolumn{4}{|c|}{$m=20$} \\
\hline
 $Sr.No.$ & $p$ & $k$ & $m$ \\
 \hline
1& 4& 3& 6\\
2 &8 &15& 9\\

\hline 
\end{tabular}
\end{center}	

\subsubsection*{for t=22:}
\begin{center}
\renewcommand{\arraystretch}{1}
\begin{tabular}{|c| c c c|}
%\hline
 %\multicolumn{4}{|c|}{$m=22$} \\
\hline
 $Sr.No.$ & $p$ & $k$ & $m$ \\
 \hline
1 &2& 15& 2\\
\hline 
\end{tabular}
\end{center}

\subsubsection*{for t=24:}
\begin{center}
\renewcommand{\arraystretch}{1}
\begin{tabular}{|c| c c c|}
%\hline
 %\multicolumn{4}{|c|}{$m=24$} \\
\hline
 $Sr.No.$ & $p$ & $k$ & $m$ \\
 \hline
1 &4& 3& 8\\
2 &3& 2& 6\\
\hline 
\end{tabular}
\end{center}

\subsubsection*{for t=28:}
\begin{center}
\renewcommand{\arraystretch}{1}
\begin{tabular}{|c| c c c|}
%\hline
 %\multicolumn{4}{|c|}{$m=28$} \\
\hline
 $Sr.No.$ & $p$ & $k$ & $m$ \\
 \hline
1 &2& 15& 4\\

\hline 
\end{tabular}
\end{center}

\subsubsection*{for t=30:}
\begin{center}
\renewcommand{\arraystretch}{1}
\begin{tabular}{|c| c c c|}
%\hline
 %\multicolumn{4}{|c|}{$m=30$} \\
\hline
 $Sr.No.$ & $p$ & $k$ & $m$ \\
 \hline
1 &2 &3 &5\\
\hline 
\end{tabular}
\end{center}

\subsubsection*{for t=36:}
\begin{center}
\renewcommand{\arraystretch}{1}
\begin{tabular}{|c| c c c|}
%\hline
 %\multicolumn{4}{|c|}{$m=36$} \\
\hline
 $Sr.No.$ & $p$ & $k$ & $m$ \\
 \hline
1 &2 &15 &6\\

\hline 
\end{tabular}
\end{center}

\end{document}